\newcommand\comment[1]{}
\renewcommand\theenumi{(\roman{enumi})}
\renewcommand\labelenumi{\theenumi}
\def\NN{{\mathbb N}}
\def\QQ{{\mathbb Q}}
\newcommand{\RR}{{\mathbb R}}
\renewcommand{\AA}{{\mathbb A}}
\def\O{{\mathcal O}}
\def\id{{\rm id}}
\def\vect#1{\text{\boldmath $#1$\unboldmath}}
\renewcommand{\th}[1]{\theta^{(#1)}}
\newcommand{\uob}{\mathds{1}}
\newcommand{\partdef}[1]{ \left\{ \begin{array}{ll} #1 \end{array} \right. }
\newcommand{\Jac}[2]{J(\vect{#1},\vect{#2})}
\DeclareMathOperator{\ord}{ord}
\def\markdef{\bf }
\theoremstyle{plain}
\newtheorem{thm}{Theorem}[section]
\newtheorem{cor}[thm]{Corollary}
\newtheorem{lem}[thm]{Lemma}
\newtheorem{prop}[thm]{Proposition}
\theoremstyle{definition}
\newtheorem{defn}[thm]{Definition}
\newtheorem{exmp}[thm]{Example}
\newtheorem{rem}[thm]{Remark}
\newtheoremstyle{Acknowledgements}
  {}
    {}
     {}
     {}
    {\bfseries}
    {}
     {.5em}
    {\thmname{#1}\thmnumber{ }\thmnote{ (#3)}}
\theoremstyle{Acknowledgements}
\begin{document}

\title[chain rule formula and polynomial inverses]{A chain rule formula for higher derivations and inverses of polynomial maps}

\author{Andreas Maurischat}
\address{\rm {\bf Andreas Maurischat}, Lehrstuhl A f\"ur Mathematik, RWTH Aachen University, Germany }
\email{\sf andreas.maurischat@matha.rwth-aachen.de}
\thanks{}

\subjclass[2010]{12H05, 13N15, 14R15}

\keywords{Higher derivations, chain rule, polynomial automorphisms}

\date{$7^{\rm th}$ Nov, 2016}

\begin{abstract}
The multidimensional chain rule formula for analytic functions  and its generalisation to higher derivatives perfectly work in the algebraic setting in characteristic zero. In positive characteristic one runs into problems due to denominators in these formulas.

In this article we show a direct analog of these formulas using higher derivations which are defined in any characteristic. We also use these formulas to show how higher derivations to different coordinate systems are related to each other.

Finally, we apply this to polynomial automorphisms in arbitrary characteristic and obtain a formula for the inverse of such a polynomial automorphism.
\end{abstract}

\maketitle

\section{Introduction}

In analysis there is the well-known chain rule formula for differentiable functions $f:U\subseteq \RR^n\to \RR^m$ and $g:V\subseteq \RR^m\to \RR^l$ stating that the total derivative $D_a(g\circ f)$ of $g\circ f$ at a point $a\in U$ is the composition of the total derivatives $D_{f(a)}(g)$ and $D_a(f)$, i.e.
\[ D_a(g\circ f)=D_{f(a)}(g)\circ D_a(f). \]
In terms of the partial derivatives this is
\[ J_a(g\circ f)=J_{f(a)}(g)\cdot J_a(f), \]
where 
\[ J_a(f)= \begin{pmatrix} \frac{\partial f_1}{\partial x_1}(a) & \frac{\partial f_1}{\partial x_2}(a) &\cdots & \frac{\partial f_1}{\partial x_n}(a)\\
\vdots & \vdots & & \vdots \\
\frac{\partial f_m}{\partial x_1}(a) & \frac{\partial f_m}{\partial x_2}(a) &\cdots & \frac{\partial f_m}{\partial x_n}(a)
\end{pmatrix} \]
is the Jacobian matrix of $f$ at $a\in U$.

In the one-dimensional case, di Bruno \cite{fb:nnfcd} generalized this chain rule to higher derivatives giving the formula
\begin{equation}\label{eq:faa-di-bruno-formula}
 \tfrac{d^j}{dx^j}(g\circ f)(x)= 
 \sum_{\substack{b_1,\ldots, b_j\in \NN\\ \sum ib_i=j}} \tfrac{j!}{b_1!\cdots b_j!} g^{(\sum b_i)}(f(x))
\left( \tfrac{f'(x)}{1!} \right)^{b_1}\left( \tfrac{f''(x)}{2!} \right)^{b_2}\cdots \left( \tfrac{f^{(j)}(x)}{j!} \right)^{b_j}.
\end{equation} 
This formula was generalized also to arbitrary dimensions in \cite[Formula B]{lef:ffhdcf}.
Using  the usual multiindex notation as well as $D^\vect{\alpha}=(\frac{\partial}{\partial x_1})^{\alpha_1}\circ  \cdots
\circ (\frac{\partial}{\partial x_n})^{\alpha_n}$ for $\vect{\alpha}=(\alpha_1,\ldots, \alpha_n) \in \NN^n$ the formula reads
\begin{equation}\label{eq:faa-di-bruno-formula-general}
D^{\vect{\alpha}}(g\circ f)(x) = \hspace*{-2pt}
\sum_{\substack{\vect{\lambda}\in \NN^m\\ 1\leq |\vect{\lambda}|\leq |\vect{\alpha}|}} \hspace*{-2pt}
\tfrac{\vect{\alpha}!}{\vect{\lambda}!} D^\vect{\lambda}(g) (f(x)) \hspace*{-4pt}
\sum_{\vect{\gamma_1}\!+\!\ldots +\vect{\gamma_m}=\vect{\alpha}}
\hspace*{-5mm} P_{\vect{\gamma_1}}(\lambda_1,f_1;x)\cdots P_{\vect{\gamma_m}}(\lambda_m,f_m;x)
\end{equation}
where for $B_{\vect{\gamma}}=\{ \vect{\beta}\in \NN^n \mid \vect{0}< \vect{\beta}\leq 
\vect{\gamma} \}$, $\mu\in \NN$ and $v:U\to \RR$ one defines
\[  P_{\vect{\gamma}}(\mu, v; x):= \sum_{\substack{(\rho_{\vect{\beta}})_{\vect{\beta}\in B_{\vect{\gamma}}}: \sum \rho_{\vect{\beta}}=\mu \\ \sum \rho_{\vect{\beta}}\vect{\beta}=\vect{\gamma}
}} \frac{\mu !}{ \prod_{\vect{\beta}\in B_{\vect{\gamma}}} \rho_{\vect{\beta}}!} \prod_{\vect{\beta}\in B_{\vect{\gamma}}} \left(\frac{D^{\vect{\beta}}(v)(x)}{\vect{\beta}!}\right)^{\rho_{\vect{\beta}}}. \]

These analytic formulas can be translated directly to the algebraic setting over any field of characteristic zero:\\
Let $K$ be a field of characteristic zero, and let $f:U\to \AA_K^m$ and $g:V\to \AA_K^l$ be morphisms of algebraic varieties where $U\subseteq \AA_K^n$ and $V\subseteq \AA_K^m$ are Zariski-open subsets. Then 
$ J_a(g\circ f)=J_{f(a)}(g)\cdot J_a(f)$, and also Formula \eqref{eq:faa-di-bruno-formula-general} holds. Whilest the chain rule for the Jacobian matrices also holds in positive characteristic, Fa\`a di Bruno's formula and Formula \eqref{eq:faa-di-bruno-formula-general} can only  be transfered to positive characterstic $p$ when $p$ is bigger than the highest derivation order, since otherwise one would divide by $p$.

\smallskip

As it will be shown in this article, this problem can be circumvented by using iterative higher derivations, as it is already done for the one-dimensional case in \cite[Prop.~7.2]{ar:icac}.
 Roughly speaking, an iterative higher derivation is a family of maps $\theta^{(n)}$ for all $n\in \NN$ resembling the family $\frac{1}{n!}\partial^n$ for a given derivation $\partial$, but defined also in positive characteristic. Formally replacing all $\frac{1}{n!}\left(\tfrac{\partial}{\partial x_j}\right)^n$ by  symbols $\theta_j^{(n)}$, Formula \eqref{eq:faa-di-bruno-formula-general}
 turns into a formula with integer coefficients, and hence makes sense in any characteristic. Indeed we will show in Section \ref{sec:chain-rule-formula} that this analog holds in positive characteristics (cf.~Thm.~\ref{thm:chain-rule}), generalising the one-dimensional case of the formula in \cite[Prop.~7.2]{ar:icac}.  We will also give a presentation of the formula which is much easier to remember than the one above.


\smallskip

The chain rule formula given in Thm.~\ref{thm:chain-rule} will be used in Section \ref{sec:coordinate-change} to express the higher derivations with respect to one coordinate system $s_1,\ldots, s_n$ in terms of the higher derivations with respect to another coordinate system $t_1,\ldots, t_n$, and leads to a criterion for extending higher derivations to ring extensions.

\smallskip

In Cor.~\ref{cor:extension-in-polynomial-case}, we apply this to the case of an extension of polynomial rings $K[x_1,\ldots,x_n]\supseteq K[f_1,\ldots, f_n]$ whose Jacobian matrix is invertible, to obtain a unique extension of the higher derivations with respect to $f_1,\ldots, f_n$ to the polynomial ring $K[x_1,\ldots,x_n]$. Actually, we get this extendability for any commutative ring $K$ by the fact that invertibility of the  Jacobian matrix implies \'etaleness of the extension $K[x_1,\ldots,x_n]\supseteq K[f_1,\ldots, f_n]$ (cf.~Prop.~\ref{prop:unique-extension-over-ring}). This gives a much shorter proof of \cite[Thm.~2.13]{sk-so:pcawkc}.

Further on in Section \ref{sec:inversion-formula}, we apply our results to the case of polynomial endomorphisms and polynomial automorphisms. In particular, we give a formula for the inverse of an automorphism of a polynomial ring using higher derivatives, analogous to the one given for characteristic zero in \cite[Prop.~3.1.4(ii)]{avde:pajc}.

\section{Basic notation}\label{sec:notation}

All rings are assumed to be commutative with unit and different from $\{0\}$.
The set of natural numbers $\NN$ contains $0$.
Throughout the paper, $K$ will denote a field of arbitrary characteristic.

\subsection{Multi-index notation and evaluation maps}
As we will be dealing with multi-indices, we introduce the usual multi-index notation.

For $\vect{\alpha}=(\alpha_1,\ldots, \alpha_d)\in \NN^d$ and $\vect{\beta}=(\beta_1,\ldots, \beta_d)\in \NN^d$ we write
\begin{itemize}
\item $|\vect{\alpha}|=\alpha_1+\alpha_2+\ldots+\alpha_d$,
\item $\vect{\alpha}!=\alpha_1!\alpha_2!\cdots \alpha_d!$,
\item $\binom{\vect{\alpha}}{\vect{\beta}}=\binom{\alpha_1}{\beta_1}\binom{\alpha_2}{\beta_2}\cdots \binom{\alpha_d}{\beta_d}$, 
\item $\vect{\alpha}\leq \vect{\beta}$ if and only if $\alpha_i\leq \beta_i$ for all $i\in\{1,\ldots, n\}$,
\item $\vect{\alpha}< \vect{\beta}$ if and only if $\vect{\alpha}\leq \vect{\beta}$, but not equal,
\item sums $\vect{\alpha}+ \vect{\beta}$ and differences $\vect{\alpha}- \vect{\beta}$  are componentwise,
\item $\vect{0}=(0,\ldots, 0)$,
\item $\vect{r}^\vect{\alpha}=r_1^{\alpha_1}r_2^{\alpha_2}\cdots r_d^{\alpha_d}$ where
$\vect{r}=(r_1,\ldots, r_d)$ is a tuple of elements in a ring.
\end{itemize}

We will often replace indeterminants by other expressions/values. We will write
\[ H|_{T_1=a_1,\ldots, T_d=a_d} \quad \text{ or }\quad H|_{\vect{T}=\vect{a}} \] 
for short,  for evaluating a function $H$ depending on indeterminants $T_1,\ldots, T_d$ at $(a_1,\ldots, a_d)$, i.e.~replacing the $T_i$ by $a_i$.

\subsection{Higher derivations}

In the introduction we already mentioned that we need a collection of maps in positive characteristic which resembles the family $\left( \frac{1}{n!}\partial^n\right)_{n\in \NN}$. These families and some generalization will be introduce here. See also \cite[Sect.~27]{hm:crt} for univariate higher derivations and \cite[Sect.~3]{am:gticngg} for a more general notion of higher derivations. $d$-variate iterative higher derivations are treated in more detail in \cite{fh:pvtlpd}.

\begin{defn}
A {\markdef $d$-variate higher derivation} on a ring $R$ is a ring homomorphism
$\theta:R\to R[[T_1,\ldots, T_d]]$ such that $\theta(r)|_{\vect{T}=\vect{0}}=r$ for all $r\in R$.
For all $\vect{\alpha}=(\alpha_1,\ldots, \alpha_d)\in \NN^d$, we get induced additive maps $\th{\vect{\alpha}}:R\to R$ given by mapping $r\in R$ to the coefficient of
$\vect{T}^\vect{\alpha}=T_1^{\alpha_1}\cdots T_d^{\alpha_d}$ in $\theta(r)$. Hence for all $r\in R$ one has
\[ \theta(r)=\sum_{\vect{\alpha}\in \NN^d} \th{\vect{\alpha}}(r) \vect{T}^\vect{\alpha}. \]
A $d$-variate higher derivation $\theta$ is called a $d$-variate {\markdef iterative higher derivation} if for all $\vect{\alpha},\vect{\beta}\in \NN^d$ one has
\[ \th{\vect{\alpha}}\circ \th{\vect{\beta}}= \binom{\vect{\alpha}+\vect{\beta}}{\vect{\alpha}}\th{\vect{\alpha}+\vect{\beta}}. \]
\end{defn}


\begin{rem}
As it is easily seen, a $d$-variate higher derivation can equivalently be defined by a collection of additive maps $(\th{\vect{\alpha}}:R\to R)_{\vect{\alpha}\in \NN^d}$ such that
\begin{enumerate}
\item $\th{\vect{0}}=\id_R$,
\item\label{item:leibniz-rule} $\th{\vect{\gamma}}(rs)=\sum_{\vect{\alpha}+\vect{\beta}=\vect{\gamma}}\th{\vect{\alpha}}(r)\th{\vect{\beta}}(s)$ for all $r,s\in R$, $\vect{\gamma}\in \NN^d$,
\end{enumerate}
and $\theta:R\to R[[T_1,\ldots, T_d]], r\mapsto \sum_{\vect{\alpha}\in \NN^d} \th{\vect{\alpha}}(r) \vect{T}^\vect{\alpha}$.

In the univariate (=$1$-variate) case, this is exactly the usual definition of a higher derivation.
\end{rem}

%

\begin{lem}\label{lem:d-variate-higher-derivation}
Let $\theta_1,\ldots, \theta_d$ be $d$ univariate higher derivations on a ring $R$, and let $T_1,\ldots, T_d$ be indeterminates.
Then the map
\[ \theta:R\to R[[T_1,\ldots, T_d]], r\mapsto \sum_{\alpha_1,\ldots,\alpha_d\in \NN}
(\th{\alpha_1}_1\circ \cdots \circ \th{\alpha_d}_d) (r)T_1^{\alpha_1}\cdots T_d^{\alpha_d}
 \] 
 is a $d$-variate higher derivation, and the higher derivations $\theta_1,\ldots, \theta_d$ are determined by $\theta$.
\end{lem}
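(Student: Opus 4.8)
The plan is to treat the two assertions separately: first that $\theta$ is a $d$-variate higher derivation, and then that each $\theta_j$ can be recovered from $\theta$. For the first part I would avoid a direct Leibniz-rule computation and instead exhibit $\theta$ as a composite of ring homomorphisms. The key observation is that a univariate higher derivation $\theta_j\colon R\to R[[T_j]]$ extends, acting coefficientwise in the remaining variables, to a ring homomorphism
\[ \hat\theta_j\colon R[[T_{j+1},\ldots,T_d]]\to R[[T_j,T_{j+1},\ldots,T_d]],\quad \sum_{\vect{\delta}} r_{\vect{\delta}}\vect{T}^{\vect{\delta}}\mapsto \sum_{\vect{\delta}}\theta_j(r_{\vect{\delta}})\vect{T}^{\vect{\delta}}, \]
where $\vect{\delta}$ runs over exponents of $T_{j+1},\ldots,T_d$ (for $j=d$ this is just $\hat\theta_d=\theta_d$). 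Additivity of $\hat\theta_j$ is clear, and multiplicativity follows by comparing the coefficient of each $\vect{T}^{\vect{\eta}}$ on the two sides of a Cauchy product and using that $\theta_j$ is a ring homomorphism. I would then check that $\theta=\hat\theta_1\circ\hat\theta_2\circ\cdots\circ\hat\theta_d$, which is therefore a ring homomorphism.

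To confirm that this composite reproduces the stated formula I would expand it one factor at a time: starting from $\theta_d(r)=\sum_{\alpha_d}\th{\alpha_d}_d(r)T_d^{\alpha_d}$, each application of $\hat\theta_{j}$ replaces every coefficient by its $\theta_j$-expansion, so after all $d$ steps the coefficient of $T_1^{\alpha_1}\cdots T_d^{\alpha_d}$ is precisely $(\th{\alpha_1}_1\circ\cdots\circ\th{\alpha_d}_d)(r)$, as required. Evaluating at $\vect{T}=\vect{0}$ isolates the term $\vect{\alpha}=\vect{0}$, namely $(\th{0}_1\circ\cdots\circ\th{0}_d)(r)=r$ since each $\th{0}_j=\id_R$; this gives the normalisation $\theta(r)|_{\vect{T}=\vect{0}}=r$, so $\theta$ is a $d$-variate higher derivation. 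As an alternative, one can verify the two conditions from the Remark directly: $\th{\vect{0}}=\id_R$ is immediate, while the Leibniz rule $\th{\vect{\gamma}}(rs)=\sum_{\vect{\alpha}+\vect{\beta}=\vect{\gamma}}\th{\vect{\alpha}}(r)\th{\vect{\beta}}(s)$ follows by applying the univariate Leibniz rules for $\theta_d,\ldots,\theta_1$ successively and pulling out the sums using additivity of each $\th{k}_j$.

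For the second assertion I would observe that restricting $\theta$ to one variable recovers the corresponding $\theta_j$: setting $T_i=0$ for all $i\neq j$ annihilates every monomial except the pure powers of $T_j$, and the coefficient of $T_j^{n}$ among these is $(\th{0}_1\circ\cdots\circ\th{n}_j\circ\cdots\circ\th{0}_d)(r)=\th{n}_j(r)$. Hence $\theta(r)|_{T_i=0\,(i\neq j)}=\theta_j(r)$ for every $j$, so the data of $\theta_1,\ldots,\theta_d$ is indeed determined by $\theta$.

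I do not anticipate a genuine difficulty here; the one step that needs care is the multiplicativity of $\theta$, and casting it as the composite $\hat\theta_1\circ\cdots\circ\hat\theta_d$ is exactly what makes this transparent, reducing it to the univariate ring-homomorphism property of each $\theta_j$ applied coefficientwise. The remaining work—the iterated expansion matching the coefficients of $\vect{T}^{\vect{\alpha}}$, and the two evaluations at $\vect{T}=\vect{0}$—is routine bookkeeping.
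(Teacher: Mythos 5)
Your proposal is correct and follows essentially the same route as the paper: both exhibit $\theta$ as the composite $\hat\theta_1\circ\cdots\circ\hat\theta_d$ of the coefficientwise (i.e.\ $\vect{T}$-linear) extensions of the $\theta_j$, check the normalisation at $\vect{T}=\vect{0}$, and recover each $\theta_j$ from the pure-power coefficients of $T_j$ (equivalently, by setting the other variables to zero, which the paper also notes). The extra detail you supply on the multiplicativity of each $\hat\theta_j$ is fine but not a point of divergence.
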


\begin{proof}
The map $\theta$ is just the composition of ring homomorphisms
\begin{eqnarray*} 
& R\xrightarrow{\theta_d} R[[T_d]] &\xrightarrow{\theta_{d-1}[[T_d]]}R[[T_{d-1},T_d]]
\xrightarrow{\theta_{d-2}[[T_{d-1},T_d]]}  R[[T_{d-2}, T_{d-1}, T_d]] \\
&& \to \ldots \, \to R[[T_2,\ldots, T_d]]\xrightarrow{\theta_{1}[[T_{2},\ldots ,T_d]]} \quad R[[T_1,\ldots, T_d]] 
\end{eqnarray*}
where $\theta_{d-1}[[T_d]]$ denotes the $T_d$-linear extension of the higher derivation
$\theta_{d-1}:R\to R[[T_{d-1}]]$, and $\theta_{d-2}[[T_{d-1},T_d]]$ denotes the $T_{d-1}$-linear and $T_d$-linear extension of the higher derivation
$\theta_{d-2}:R\to R[[T_{d-2}]]$ etc.
Also by definition, $\theta(r)|_{\vect{T}=\vect{0}}=(\th{0}_1\circ \cdots \circ \th{0}_d) (r) = r$ for all $r\in R$. Hence, $\theta$ is a higher derivation. \\
Since the coefficient of $T_i^m$ in $\theta(r)$ is by definition just $\th{m}_i(r)$, the homomorphism $\theta$ determines the higher derivations $\theta_1,\ldots, \theta_d$.\\
Actually, one can also obtain $\theta_i$ by composing $\theta$ with the evaluation homomorphism sending $T_j$ to $0$ for $j\ne i$.
\end{proof}

\begin{exmp}\label{ex:ID-rings}
\begin{enumerate}
\item\label{item:der by t_i} For any field $K$ and $R:=K[t_1,\ldots, t_d]$, the higher derivation
$\theta_{t_i}:R \to R[[T]]$ ($i\in \{1,\ldots d\}$) 
given by $\theta_{t_i}(t_i):=t_i+T$ and $\theta_{t_i}(t_j)=t_j$ if $j\ne i$ is called the {\markdef (iterative) higher derivation with respect to~$t_i$}. The $K$-linear maps $\th{n}_{t_i}$ can explicitly be described as
\[ \th{n}_{t_i}\left( t_1^{\alpha_1}t_2^{\alpha_2}\cdots t_d^{\alpha_d}\right)=\binom{\alpha_i}{n}t_1^{\alpha_1}t_2^{\alpha_2}\cdots t_i^{\alpha_i-n}\cdots t_d^{\alpha_d}. \]
Hence, the first map $\th{1}_{t_i}$ is nothing else than the usual partial derivation $\frac{\partial}{\partial t_i}$, and in characteristic zero, the $\th{n}_{t_i}$ equal $\frac{1}{n!}\left(\frac{\partial}{\partial t_i}\right)^n$.
\item\label{item:der by t} Also for $R:=K[[t_1,\ldots, t_d]]$ the iterative higher derivations
$\theta_{t_i}:R \to R[[T]]$ ($i\in \{1,\ldots d\}$) 
given by $\theta_{t_i}(t_i):=t_i+T$ and $\theta_{t_i}(t_j)=t_j$ and continuous extension are well-defined.
\item\label{item:multi-der by t} By Lemma \ref{lem:d-variate-higher-derivation}, the higher derivations $\theta_i$ give rise to a $d$-variate higher derivation on $R:=K[t_1,\ldots, t_d]$ and $R:=K[[t_1,\ldots, t_d]]$ which will be denoted by
$\theta_{\vect{t}}$ where $\vect{t}$ is the tuple $(t_1,\ldots, t_d)$.
More detailed, $\theta_{\vect{t}}:R\to R[[T_1,\ldots, T_d]]$ is
given by
\[ \theta_{\vect{t}}(t_i)=t_i+T_i \quad \text{ for }i=1,\ldots, d. \]
This higher derivation is even an iterative higher derivation which follows from the following remark.
\end{enumerate}
\end{exmp}

\begin{rem}
Let $\theta_1,\ldots, \theta_d:R\to R[[T]]$ be univariate higher derivations, and $\theta:R\to R[[T_1,\ldots,T_d]]$ the $d$-variate higher derivation composed of them as in Lemma \ref{lem:d-variate-higher-derivation}. Then $\theta$ is an iterative higher derivation if and only if the $\theta_i$ are iterative higher derivations which pairwise commute, i.e.~such that $\th{n}_i\circ \th{m}_j= \th{m}_j\circ \th{n}_i$ for all $n,m\in \NN$, and $i,j\in \{1,\ldots, d\}$. The easy proof can be found in \cite[Satz 4.1.7]{fh:pvtlpd}.

In \cite{fh:pvtlpd}, it is also shown that the condition for being iterative $\th{\vect{\alpha}}\circ \th{\vect{\beta}}= \binom{\vect{\alpha}+\vect{\beta}}{\vect{\alpha}}\th{\vect{\alpha}+\vect{\beta}}$ for all $\vect{\alpha},\vect{\beta}\in \NN^d$ can be expressed equivalently by requiring that the diagram

\centerline{
\xymatrix{ R \ar[r]^(.4){\theta} \ar[d]^{\theta} & R[[\vect{T}]] \ar[r]^{\vect{T}\mapsto \vect{U}} & R[[\vect{U}]] \ar[d]^{\theta[[\vect{U}]]} \\
R[[\vect{T}]]  \ar[rr]^{\vect{T}\mapsto \vect{T}+\vect{U}} && R[[\vect{T},\vect{U}]]
}}

commutes. Here, $\vect{U}=(U_1,\ldots,U_d)$ denotes another $d$-tuple of indeterminates, and as in the proof of Lemma \ref{lem:d-variate-higher-derivation}, $\theta[[\vect{U}]]:R[[\vect{U}]]\to R[[\vect{T},\vect{U}]]$ denotes the $\vect{U}$-linear extension of $\theta$. 
\end{rem}

We will also be concerned with higher derivations on other rings. To obtain those we will make use of the following lemma which is given in \cite[Thm.~27.2]{hm:crt} for the univariate case and in \cite[Prop.~3.7]{am:gticngg} in general. However, the proof for the univariate case easily  generalizes to the multivariate case.
 
\begin{lem}\label{lem:unique-extension}
Let $S/R$ be an \'etale extension, and $\psi:R\to R[[T_1,\ldots,T_d]]$ a $d$-variate higher derivation. Then
\begin{enumerate}
\item $\psi$ uniquely extends to a $d$-variate higher derivation on $S$. In particular, this applies to localizations and to separable field extensions.
\item If $\psi$ is iterative, then its unique extension is iterative, too.
\end{enumerate} 
\end{lem}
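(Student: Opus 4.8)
The plan is to reduce both parts to the infinitesimal lifting property that characterizes formally étale extensions, applied not to the evaluation map $R[[\vect{T}]]\to R$ itself (whose kernel is not nilpotent) but to its finite truncations. First I would reformulate the notion of extension. Write $\iota\colon R[[\vect{T}]]\to S[[\vect{T}]]$ for the map induced by $R\to S$, and $\mathrm{ev}\colon S[[\vect{T}]]\to S$ for evaluation at $\vect{T}=\vect{0}$. A $d$-variate higher derivation $\tilde{\psi}$ on $S$ extends $\psi$ exactly when $\tilde{\psi}\colon S\to S[[\vect{T}]]$ is a ring homomorphism with $\tilde{\psi}|_R=\iota\circ\psi$ and $\mathrm{ev}\circ\tilde{\psi}=\id_S$. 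Equip $S[[\vect{T}]]$ with the $R$-algebra structure $\phi:=\iota\circ\psi$; since $\psi$ is a higher derivation, $\mathrm{ev}\circ\phi$ equals the structure map $R\to S$, so $\mathrm{ev}$ is an $R$-algebra homomorphism and an extension of $\psi$ is precisely an $R$-algebra section of $\mathrm{ev}$.

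Next I would build this section level by level. Put $\m=(T_1,\ldots,T_d)$ and consider the truncations $S[[\vect{T}]]/\m^{n}=S[\vect{T}]/\m^{n}$, viewed as $R$-algebras via $\phi$ followed by projection. Each projection $S[\vect{T}]/\m^{n}\to S[\vect{T}]/\m=S$ has nilpotent kernel $\m/\m^{n}$, so formal étaleness of $S/R$ lifts the $R$-algebra map $\id_S\colon S\to S[\vect{T}]/\m$ uniquely to an $R$-algebra map $\tilde{\psi}_n\colon S\to S[\vect{T}]/\m^{n}$. Uniqueness forces the $\tilde{\psi}_n$ to be compatible under the transition maps $S[\vect{T}]/\m^{n+1}\to S[\vect{T}]/\m^{n}$, hence they assemble to an $R$-algebra homomorphism $\tilde{\psi}\colon S\to\varprojlim_n S[\vect{T}]/\m^{n}=S[[\vect{T}]]$. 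By construction $\tilde{\psi}$ is a ring homomorphism with $\mathrm{ev}\circ\tilde{\psi}=\id_S$ and $\tilde{\psi}|_R=\phi$, so it is a higher derivation on $S$ extending $\psi$, and the uniqueness of each $\tilde{\psi}_n$ yields uniqueness of $\tilde{\psi}$. This proves (i); the ``in particular'' cases hold because localizations and separable field extensions are formally étale, so the identical lifting applies.

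For (ii) I would verify the iterativity diagram of the Remark for $\tilde{\psi}$. Let $\vect{U}$ be a second $d$-tuple of indeterminates and form the two ring homomorphisms $S\to S[[\vect{T},\vect{U}]]$ obtained by traversing the square, namely $L:=(\vect{T}\mapsto\vect{T}+\vect{U})\circ\tilde{\psi}$ and $R':=\tilde{\psi}[[\vect{U}]]\circ(\vect{T}\mapsto\vect{U})\circ\tilde{\psi}$. Because $\psi$ is iterative, its own diagram commutes, and hence $L$ and $R'$ restrict to one and the same homomorphism $\Phi\colon R\to S[[\vect{T},\vect{U}]]$. Giving $S[[\vect{T},\vect{U}]]$ the $R$-algebra structure $\Phi$, both $L$ and $R'$ become $R$-algebra homomorphisms, and evaluating all of $\vect{T},\vect{U}$ at $\vect{0}$ sends each of them to $\id_S$. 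Repeating the truncation argument with $\m'=(\vect{T},\vect{U})$, the $R$-algebra lift of $\id_S$ along $S\to S[[\vect{T},\vect{U}]]/(\m')^{n}$ is unique at every level, so $L=R'$, which is exactly the iterativity of $\tilde{\psi}$.

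The main obstacle is the first conceptual point: the evaluation map is not a nilpotent thickening, so formal étaleness cannot be invoked directly. The crux is therefore to interpose the finite truncations $S[\vect{T}]/\m^{n}$, to check that the $R$-algebra structures are compatible at each level, and to use uniqueness both for gluing the levels into a map to the full power series ring and for deriving the needed transition compatibility. Once this mechanism is in place, existence and uniqueness in (i) and the commutativity in (ii) all follow from the same infinitesimal lifting statement.
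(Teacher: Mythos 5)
Your proof is correct and follows essentially the argument the paper relies on: the paper gives no proof of its own but cites Matsumura's Thm.~27.2 and the author's earlier Prop.~3.7, whose proofs are exactly this reduction to the infinitesimal lifting property of (formally) \'etale maps along the truncations $S[\vect{T}]/\m^n$, with iterativity obtained from the uniqueness of such lifts applied to the two sides of the coassociativity square. Nothing further is needed.
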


\begin{defn}\label{def:theta-t}
Let $R$ be an \'etale extension of the polynomial ring $K[t_1,\ldots, t_d]$, and $\vect{t}=(t_1,\ldots, t_d)$. Then the unique extension of the $d$-variate iterative higher derivation $\theta_{\vect{t}}$ above to a $d$-variate iterative higher derivation on $R$ will also be denoted by
$\theta_{\vect{t}}:R\to R[[T_1,\ldots, T_d]]$. 
\end{defn}

\begin{defn}
Let $R$ be an \'etale extension of the polynomial ring $K[t_1,\ldots, t_d]$, and
$\theta_{\vect{t}}$ the higher derivation with respect to $\vect{t}=(t_1,\ldots, t_d)$ on $R$. Then for any $n$-tuple $\vect{f}=(f_1,\ldots,f_n)$ of elements of $R$ we denote the Jacobian matrix of $\vect{f}$ with respect to $\vect{t}$ by
\[ \Jac{f}{t}:=\begin{pmatrix} \th{1}_{t_1}(f_1) &  \th{1}_{t_2}(f_1) &\cdots &  \th{1}_{t_n}(f_1) \\
\vdots & \vdots & & \vdots \\
 \th{1}_{t_1}(f_n) &  \th{1}_{t_2}(f_n) &\cdots &  \th{1}_{t_n}(f_n)
\end{pmatrix}=  \begin{pmatrix} \frac{\partial f_1}{\partial t_1} & \frac{\partial f_1}{\partial t_2} &\cdots & \frac{\partial f_1}{\partial t_n}\\
\vdots & \vdots & & \vdots \\
\frac{\partial f_n}{\partial t_1} & \frac{\partial f_n}{\partial t_2} &\cdots & \frac{\partial f_n}{\partial t_n}
\end{pmatrix}  . \]
\end{defn}

\section{Chain rule formula}\label{sec:chain-rule-formula}

We are now prepared to state and to proof the chain rule formula.

\begin{thm}[Chain rule]\label{thm:chain-rule}
Let $U\subseteq \AA_K^n$ and $V\subseteq \AA_K^m$ be Zariski-open subsets, and
let $f:U\to \AA_K^m$ and $g:V\to \AA_K^l$ be morphisms of algebraic varieties given by tuples of rational functions $\vect{f}=(f_1(x_1,\ldots, x_n),\ldots, f_m(x_1,\ldots, x_n))$  and $\vect{g}=(g_1(y_1,\ldots, y_m),\ldots, g_l(y_1,\ldots, y_m))$,
 respectively. Further, let $\theta_{\vect{x}}$ be the $n$-variate higher derivation on $\Gamma(U,\O_U)\subseteq K(x_1,\ldots,x_n)$ and $\theta_{\vect{y}}$ 
 the $m$-variate higher derivation on $\Gamma(V,\O_V)\subseteq K(y_1,\ldots,y_m)$ defined as in Def.~\ref{def:theta-t}. 
  Then we have\footnote{Here and in the following, application of a higher derivation to a vector is meant componentwise, e.g.~$\theta_{\vect{x}}(\vect{f})=(\theta_{\vect{x}}(f_1),\ldots, \theta_{\vect{x}}(f_n))$.}
 \begin{eqnarray*}  \theta_{\vect{x}}(g\circ f) &=& (\theta_{\vect{y}}(g)|_{\vect{y}=\vect{f}})|_{\vect{T}= \theta_{\vect{x}}(\vect{f})-\vect{f}} \\
 &=&  \sum_{\vect{\mu}\in \NN^m} \theta_{\vect{y}}^{(\vect{\mu})}(g)|_{\vect{y}=\vect{f}}\cdot  \prod_{j=1}^m \left(
 \sum_{\substack{\vect{\nu}\in \NN^n\\ \vect{\nu}>\vect{0}}}  \theta_{\vect{x}}^{(\vect{\nu})}(f_j) \vect{T}^\vect{\nu} \right)^{\mu_j}
 \end{eqnarray*}
\end{thm}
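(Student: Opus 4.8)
The plan is to recast the first identity as the commutativity of a square of ring homomorphisms and then to verify it on generators. Since $\theta_{\vect{x}}$, $\theta_{\vect{y}}$ and composition with $f$ are all applied componentwise, I may assume $l=1$ and regard $g$ as a single element of $\Gamma(V,\O_V)$. Let $f^\ast\colon \Gamma(V,\O_V)\to \Gamma(U,\O_U)$, $y_j\mapsto f_j$, be the pullback ring homomorphism, so that $g\circ f=f^\ast(g)$, and introduce the continuous ring homomorphism
\[ \Phi\colon \Gamma(V,\O_V)[[T_1,\ldots,T_m]]\longrightarrow \Gamma(U,\O_U)[[T_1,\ldots,T_n]] \]
which acts as $f^\ast$ on coefficients and sends the $j$-th source indeterminate $T_j$ to $\theta_{\vect{x}}(f_j)-f_j$. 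This $\Phi$ is well defined because $\theta_{\vect{x}}(f_j)-f_j=\sum_{\vect{\nu}>\vect{0}}\theta_{\vect{x}}^{(\vect{\nu})}(f_j)\vect{T}^{\vect{\nu}}$ has vanishing constant term, so substituting it for $T_j$ converges in the $(\vect{T})$-adic topology. By construction the right-hand side of the theorem is exactly $\Phi(\theta_{\vect{y}}(g))$, so the first asserted equality is equivalent to the commutativity $\theta_{\vect{x}}\circ f^\ast=\Phi\circ\theta_{\vect{y}}$.

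Both composites are ring homomorphisms $\Gamma(V,\O_V)\to \Gamma(U,\O_U)[[T_1,\ldots,T_n]]$, so it suffices to compare them on a generating set. On the polynomial subring $K[y_1,\ldots,y_m]$ this is immediate: on the generator $y_j$ the first composite gives $\theta_{\vect{x}}(f^\ast(y_j))=\theta_{\vect{x}}(f_j)$, while the second gives $\Phi(\theta_{\vect{y}}(y_j))=\Phi(y_j+T_j)=f_j+(\theta_{\vect{x}}(f_j)-f_j)=\theta_{\vect{x}}(f_j)$, and both fix $K$. Thus the two homomorphisms agree on $K[y_1,\ldots,y_m]$; this polynomial case is nothing more than the homomorphism property of $\theta_{\vect{x}}$ together with the defining values $\theta_{\vect{x}}(x_i)=x_i+T_i$ and $\theta_{\vect{y}}(y_j)=y_j+T_j$.

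It remains to pass from $K[y_1,\ldots,y_m]$ to the whole ring $\Gamma(V,\O_V)$, and this is the only step requiring care. Both composites reduce modulo the ideal $(\vect{T})$ to the single map $f^\ast$ (because $\theta(r)|_{\vect{T}=\vect{0}}=r$ and $\Phi(T_j)$ has no constant term), they agree on $K[y_1,\ldots,y_m]$, and the target $\Gamma(U,\O_U)[[\vect{T}]]$ is $(\vect{T})$-adically complete. Since $\Gamma(V,\O_V)$ is \'etale over $K[y_1,\ldots,y_m]$ (as required for $\theta_{\vect{y}}$ to be defined in Def.~\ref{def:theta-t}), the lift of $f^\ast$ along the complete surjection $\Gamma(U,\O_U)[[\vect{T}]]\twoheadrightarrow \Gamma(U,\O_U)$ that restricts to the prescribed homomorphism on $K[y_1,\ldots,y_m]$ is unique; this is exactly the formal-\'etaleness argument underlying Lemma~\ref{lem:unique-extension}. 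Hence $\theta_{\vect{x}}\circ f^\ast=\Phi\circ\theta_{\vect{y}}$ on all of $\Gamma(V,\O_V)$, which is the first equality. The second equality is then a formal expansion: inserting $\theta_{\vect{y}}(g)=\sum_{\vect{\mu}\in\NN^m}\theta_{\vect{y}}^{(\vect{\mu})}(g)\,\vect{T}^{\vect{\mu}}$ and $\Phi(T_j)=\sum_{\vect{\nu}>\vect{0}}\theta_{\vect{x}}^{(\vect{\nu})}(f_j)\vect{T}^{\vect{\nu}}$ into $\Phi(\theta_{\vect{y}}(g))$ and writing $\vect{T}^{\vect{\mu}}=\prod_{j=1}^m T_j^{\mu_j}$ produces the displayed product formula. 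I expect the main obstacle to be bookkeeping rather than conceptual: keeping the $m$ source indeterminates and the $n$ target indeterminates apart, checking that $\Phi$ is genuinely well defined, and invoking \'etaleness to move from polynomials to $\Gamma(V,\O_V)$—the verification on generators itself being immediate.
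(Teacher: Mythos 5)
Your proposal is correct and follows essentially the same route as the paper: both proofs identify the two sides of the first equality as the images of $g$ under two ring homomorphisms out of (a subring of) $K(y_1,\ldots,y_m)$ into $\Gamma(U,\O_U)[[T_1,\ldots,T_n]]$ and verify agreement on the generators $y_j$, after which the second equality is formal expansion. The only difference is cosmetic: where the paper simply asserts that agreement on the $y_j$ suffices (homomorphisms on a subring of the rational function field being determined by the images of the $y_j$), you justify the passage from $K[y_1,\ldots,y_m]$ to $\Gamma(V,\O_V)$ via the formal-\'etaleness uniqueness underlying Lemma~\ref{lem:unique-extension}, which is a valid and slightly more explicit version of the same step.
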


\begin{proof}
$\theta_{\vect{x}}(g\circ f) $ is the image of $g=(g_1,\ldots, g_l)$ via the ring homomorphism
\[K(y_1,\ldots, y_m)\xrightarrow{\vect{y}\mapsto \vect{f}} K(x_1,\ldots, x_n)\xrightarrow{\theta_\vect{x}}
K(x_1,\ldots, x_n)[[T_1,\ldots, T_n]],\]
and $(\theta_{\vect{y}}(g)|_{\vect{y}=\vect{f}})|_{\vect{T}= \theta_{\vect{x}}(\vect{f})-\vect{f}}$ is its image via the ring homomorphism
\begin{eqnarray*} K(y_1,\ldots, y_m) &\xrightarrow{\theta_{\vect{y}}}& K(y_1,\ldots, y_m)[[T_1,\ldots, T_m]]\\
&\xrightarrow{\vect{y}\mapsto \vect{f}}& K(x_1,\ldots, x_n)[[T_1,\ldots, T_m]] \\ 
&\xrightarrow{\vect{T}\mapsto \theta_\vect{x}(\vect{f})-\vect{f}}&
K(x_1,\ldots, x_n)[[T_1,\ldots, T_n]].
\end{eqnarray*}
Hence, it is sufficient to show that both homomorphisms are the same which amounts to say that the images of each $y_j$ are the same. But $y_j$ maps to $\theta_{\vect{x}}(f_j)$ under the first homomorphism, and $y_j$ maps to
\[ (\theta_{\vect{y}}(y_j)|_{\vect{y}=\vect{f}})|_{\vect{T}= \theta_{\vect{x}}(\vect{f})-\vect{f}}
\!=\!\left( (y_j+T_j)|_{\vect{y}=\vect{f}}\right)|_{\vect{T}= \theta_{\vect{x}}(\vect{f})-\vect{f}}
\!=\! f_j+(\theta_{\vect{x}}(f_j)-f_j)\!=\!\theta_{\vect{x}}(f_j) \]
via the second homomorphism.
Hence, the claim follows.
\end{proof}

\begin{rem}\label{rem:details-for-higher-derivatives}
From the equation above, one can extract the formulas for the $\th{\vect{\lambda}}_{\vect{x}}(g\circ f) $
for any $\vect{0}<\vect{\lambda}\in \NN^n$ by collecting all terms contributing to $\vect{T}^\vect{\lambda}$. We get the same formulas as in \cite{lef:ffhdcf}, since the computation is merely the same. In detail we obtain:

\[ \th{\vect{\lambda}}_{\vect{x}}(g\circ f) = \hspace*{-2mm} \sum_{0<|\vect{\mu}|\leq |\vect{\lambda}|}  \hspace*{-1mm} \theta_{\vect{y}}^{(\vect{\mu})}(g)|_{\vect{y}=\vect{f}}\hspace*{-3pt} \sum_{\vect{\nu}_1+\ldots +\vect{\nu}_m=\vect{\lambda}} \hspace*{-4mm} P_{\vect{\nu}_1}(\lambda_1,f_1)\cdot  P_{\vect{\nu}_2}(\lambda_2,f_2)\cdots  P_{\vect{\nu}_m}(\lambda_m,f_m), \]
where for $B_\vect{\nu}=\{ \vect{\beta}\in \NN^n | \vect{0}< \vect{\beta}\leq \vect{\nu} \}$ and\\
$R_{\vect{\nu},l}=\{  (\rho_{\vect{\beta}})_{\vect{\beta}\in B_\vect{\nu}}\in \NN^{\#B_\vect{\nu}} \mid \sum\limits_{\vect{\beta}\in B_\vect{\nu}}\rho_{\vect{\beta}}=l,\, \sum\limits_{\vect{\beta}\in B_\vect{\nu}}\rho_{\vect{\beta}}\vect{\beta}=\vect{\nu} \}$:
\[ P_{\vect{\nu}}(l,h)=\sum_{(\rho_\vect{\beta})\in R_{\vect{\nu},l}} \frac{l!}{\prod_{\vect{\beta}\in B_\vect{\nu}} \rho_\vect{\beta}!}
\prod_{\vect{\beta}\in B_\vect{\nu}} \left(\th{\vect{\beta}}_{\vect{x}}(h)\right)^{\rho_{\vect{\beta}}}. \]

Be aware that for all $(\rho_\vect{\beta})\in R_{\vect{\nu},l}$ the fraction $ \frac{l!}{\prod_{\vect{\beta}\in B_\vect{\nu}} \rho_\vect{\beta}!}$ is a multinomial coefficient and hence an integer. So its residue modulo the characteristic $p$ is well defined.

In the one-dimensional case ($m=n=1$) one obtains the formula already present in \cite[Prop.~7.2]{ar:icac} which is the direct analog to the Taylor expansion approach to Fa\`a di Bruno's formula in the case of real functions.
\end{rem}

Considering only the first derivatives, one ends up with the well-known formula for the Jacobian matrices:
\[  J(g\circ f)=J(g)|_{\vect{y}=\vect{f(x)}}\cdot J(f). \]

%

\section{Coordinate change}\label{sec:coordinate-change}

In this section we focus on the higher derivations themselves.

Let $F$ be a field extension of $K$ of transcendence degree $n$. Hence every transcendence basis consists of $n$ elements, and we have seen in Example \ref{ex:ID-rings}\ref{item:multi-der by t} and Def.~\ref{def:theta-t} that any separating transcendence basis $t_1,\ldots, t_n$ gives rise to a $n$-variate higher derivation
$\theta_{\vect{t}}$ on $F$.
If $s_1,\ldots, s_n$ is another separating transcendence basis with higher derivation $\theta_{\vect{s}}$ on $F$, the question is how both higher derivations are related to each other, and in particular, how one can get $\theta_{\vect{s}}$ in terms of $\theta_{\vect{t}}$.

\begin{thm}\label{thm:change-of-coordinates}
Let $F$  be a field extension of $K$ of transcendence degree $n$ and let $t_1,\ldots, t_n$ as well as $s_1,\ldots, s_n$ be two separating transcendence bases of $F$.\footnote{We assume here that $F$ has separating transcendence bases. This is guaranteed for example in the case that $K$ is perfect and $F$ is a finitely generated field extension of $K$.} Then the higher derivation with respect to $\vect{s}=(s_1,\ldots, s_n)$ can be expressed in terms of the higher derivation with respect to $\vect{t}=(t_1,\ldots, t_n)$ and the values $\theta_{\vect{s}}^{(\vect{\nu})}(t_j)$. More precisely, for any $g\in F$ one has 
\begin{eqnarray}
 \theta_{\vect{s}}(g) &=&   \theta_{\vect{t}}(g)|_{\vect{T}= \theta_{\vect{s}}(\vect{t})-\vect{t}} \notag \\  
 &=& \sum_{\vect{\mu}\in \NN^n} \theta_{\vect{t}}^{(\vect{\mu})}(g)\cdot  \prod_{j=1}^n \left(
 \sum_{\vect{0}<\vect{\nu}\in \NN^n}  \theta_{\vect{s}}^{(\vect{\nu})}(t_j) \vect{T}^\vect{\nu} \right)^{\mu_j}. \label{eq:formula-for-theta-t-s}
\end{eqnarray}
\end{thm}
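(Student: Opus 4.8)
The plan is to reduce Theorem \ref{thm:change-of-coordinates} to the chain rule formula of Theorem \ref{thm:chain-rule} by recognizing that a change of coordinates is a special case of composition of morphisms. Concretely, I regard the two separating transcendence bases $\vect{t}=(t_1,\ldots,t_n)$ and $\vect{s}=(s_1,\ldots,s_n)$ as giving two coordinate systems on the same field $F$, and I interpret $g \in F$ as a rational function in the $t$-coordinates while the $t_j$ themselves become rational functions in the $s$-coordinates. The key observation is that the second line of \eqref{eq:formula-for-theta-t-s} is literally the $m=n$ case of the chain-rule expansion in Theorem \ref{thm:chain-rule}, with $\vect{f}=\vect{t}$ (viewed as functions of $\vect{s}$) and $g$ playing the role of the outer map, so once the first line is established the second is immediate.

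First I would establish the first equality $\theta_{\vect{s}}(g)=\theta_{\vect{t}}(g)|_{\vect{T}=\theta_{\vect{s}}(\vect{t})-\vect{t}}$ by the same homomorphism-comparison argument used in the proof of Theorem \ref{thm:chain-rule}. That is, I would exhibit both sides as the image of $g$ under two ring homomorphisms $F\to F[[T_1,\ldots,T_n]]$ and argue they coincide. The left-hand map is simply $\theta_{\vect{s}}$. The right-hand map is the composite that first applies $\theta_{\vect{t}}$ and then substitutes $T_i\mapsto \theta_{\vect{s}}(t_i)-t_i$. Since $F$ is generated as a field by either transcendence basis, it suffices to check that the two homomorphisms agree on a generating set; the natural choice is to verify they agree on the $t_j$. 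Under $\theta_{\vect{s}}$ the element $t_j$ maps to $\theta_{\vect{s}}(t_j)$, while under the composite it maps to $\left(\theta_{\vect{t}}(t_j)|_{\vect{T}=\theta_{\vect{s}}(\vect{t})-\vect{t}}\right)$. Because $\theta_{\vect{t}}(t_j)=t_j+T_j$ by Example \ref{ex:ID-rings}\ref{item:multi-der by t} and Definition \ref{def:theta-t}, the substitution gives $t_j+(\theta_{\vect{s}}(t_j)-t_j)=\theta_{\vect{s}}(t_j)$, matching exactly as in the chain-rule proof.

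The main obstacle I anticipate is justifying that checking agreement on the $t_j$ is genuinely sufficient, and that both maps are well-defined ring homomorphisms into the power series ring. Unlike the chain-rule setting, here the source and target fields coincide and both higher derivations live on the same $F$, so I must be careful that $\theta_{\vect{t}}$ and $\theta_{\vect{s}}$ are both available as honest $n$-variate higher derivations on $F$; this is exactly what the separating transcendence basis hypothesis guarantees, via Lemma \ref{lem:unique-extension} applied to the étale (indeed separable) extension $F/K(t_1,\ldots,t_n)$ and likewise for $\vect{s}$. The substitution $T_i\mapsto \theta_{\vect{s}}(t_i)-t_i$ is a continuous $F$-algebra homomorphism $F[[\vect{T}]]\to F[[\vect{T}]]$ because each $\theta_{\vect{s}}(t_i)-t_i$ has zero constant term (as $\theta_{\vect{s}}(t_i)|_{\vect{T}=\vect{0}}=t_i$), so the composite is well-defined and the power series manipulations converge. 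Since both composite and $\theta_{\vect{s}}$ are $K$-algebra homomorphisms and $F=K(s_1,\ldots,s_n)$ is generated over $K$ by the $s_i$, but I am comparing on the $t_j$, I would note that agreement on the $t_j$ forces agreement on $K(t_1,\ldots,t_n)$, and then extend to all of $F$ by uniqueness of higher derivations on the separable extension $F/K(t_1,\ldots,t_n)$ from Lemma \ref{lem:unique-extension}.

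Finally, having secured the first equality, I would obtain the explicit second line purely formally. Writing out $\theta_{\vect{t}}(g)=\sum_{\vect{\mu}\in\NN^n}\theta_{\vect{t}}^{(\vect{\mu})}(g)\,\vect{T}^{\vect{\mu}}$ and substituting $T_j\mapsto \theta_{\vect{s}}(t_j)-t_j=\sum_{\vect{0}<\vect{\nu}}\theta_{\vect{s}}^{(\vect{\nu})}(t_j)\vect{T}^{\vect{\nu}}$ into each factor $T_j^{\mu_j}$ of the monomial $\vect{T}^{\vect{\mu}}$ yields exactly the product $\prod_{j=1}^n\left(\sum_{\vect{0}<\vect{\nu}}\theta_{\vect{s}}^{(\vect{\nu})}(t_j)\vect{T}^{\vect{\nu}}\right)^{\mu_j}$, which is the stated formula. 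This step is the same bookkeeping already carried out in Remark \ref{rem:details-for-higher-derivatives}, so I would invoke that computation rather than repeat it, and if desired read off the coordinate-by-coordinate expansion of $\theta_{\vect{s}}^{(\vect{\lambda})}(g)$ from the same source.
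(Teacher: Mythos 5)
Your proof is correct and follows essentially the same route as the paper: both sides are realized as ring homomorphisms $F\to F[[\vect{T}]]$ (the substitution being legitimate since $\theta_{\vect{s}}(t_i)-t_i$ has zero constant term), agreement is checked on the $t_i$ via $\theta_{\vect{t}}(t_i)=t_i+T_i$, and the uniqueness part of Lemma \ref{lem:unique-extension} for the separable extension $F/K(t_1,\ldots,t_n)$ upgrades this to agreement on all of $F$, with the second line of \eqref{eq:formula-for-theta-t-s} then being formal substitution. The only cosmetic caveat is your opening framing of the $t_j$ as rational functions of the $s_i$ --- they are in general only algebraic over $K(s_1,\ldots,s_n)$, which is precisely why the theorem is proved by repeating the argument of Theorem \ref{thm:chain-rule} rather than by quoting it --- but your actual argument does not rely on this and is sound.
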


\begin{proof}
The proof is merely the same as for Theorem \ref{thm:chain-rule}.
The composition
\[  F\xrightarrow{\theta_{\vect{t}}} F[[\vect{T}]]\xrightarrow{\vect{T}\mapsto \theta_{\vect{s}}(\vect{t})-\vect{t}}  F[[\vect{T}]] \]
is a ring homomorphism whose composition with the evaluation map $\vect{T}\mapsto \vect{0}$ is the identity on $F$, i.e.~it is a $n$-variate higher derivation. By Lemma \ref{lem:unique-extension}, it equals $\theta_{\vect{s}}$ if and only if their restrictions to $K(t_1,\ldots, t_n)$ agree. But for every $i\in \{1,\ldots,n\}$ one has
\[  \theta_{\vect{t}}(t_i)|_{\vect{T}= \theta_{\vect{s}}(\vect{t})-\vect{t}}
= (t_i+T_i)|_{\vect{T}= \theta_{\vect{s}}(\vect{t})-\vect{t}}=t_i+\theta_{\vect{s}}(t_i)-t_i=\theta_{\vect{s}}(t_i). \]
\end{proof}

Exploring Identity \eqref{eq:formula-for-theta-t-s}, we get as the coefficient of $T_j$ the identity
\begin{equation}\label{eq:1st-partials}
 \theta_{\vect{s}}^{(\vect{e_j})}(g)= \th{1}_{s_j}(g)= \sum_{i=1}^n \th{1}_{t_i}(g)  \th{1}_{s_j}(t_i),
\end{equation}
where $\vect{e_j}=(0,\ldots, 1,\ldots, 0)$ is the $j$-th unit vector.
 
In particular, by replacing $g$ by $s_l$ for $l=1,\ldots, n$ and using $\th{1}_{s_j}(s_l)=\delta_{jl}$, we obtain the well-known formula for Jacobian matrices:
\begin{lem}
Let $F$ be a field extension of $K$ of transcendence degree $n$, and let
$s_1,\ldots, s_n$ as well as $t_1,\ldots, t_n$ be two separating transcendence bases.
Then 
\[ \Jac{t}{s}= \begin{pmatrix} \th{1}_{s_1}(t_1) &  \th{1}_{s_2}(t_1) &\cdots &  \th{1}_{s_n}(t_1) \\
\vdots & \ddots & & \vdots \\
 \th{1}_{s_1}(t_n) &  \th{1}_{s_2}(t_n) &\cdots &  \th{1}_{s_n}(t_n)
\end{pmatrix} \]
is invertible with inverse matrix $\Jac{s}{t}$.
\end{lem}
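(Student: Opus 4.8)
The plan is to obtain the lemma as an immediate consequence of Identity~\eqref{eq:1st-partials}, which already expresses the first-order higher derivations with respect to $\vect{s}$ in terms of those with respect to $\vect{t}$. First I would substitute $g = s_l$ into \eqref{eq:1st-partials} for each $l \in \{1,\ldots,n\}$. Because $\theta_{s_j}$ fixes every $s_l$ with $l \ne j$ and sends $s_j$ to $s_j + T$, the first coefficient satisfies $\th{1}_{s_j}(s_l) = \delta_{jl}$, so the identity collapses to
\[ \delta_{jl} = \sum_{i=1}^n \th{1}_{t_i}(s_l)\,\th{1}_{s_j}(t_i) \qquad (j,l \in \{1,\ldots,n\}). \]

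The key step is then to read the right-hand side as an entry of a matrix product. Setting $A := \Jac{s}{t}$, whose $(l,i)$-entry is $\th{1}_{t_i}(s_l)$, and $B := \Jac{t}{s}$, whose $(i,j)$-entry is $\th{1}_{s_j}(t_i)$, the displayed relation reads precisely $(AB)_{lj} = \delta_{lj}$, i.e.\ $\Jac{s}{t}\cdot\Jac{t}{s} = I_n$. Since both are $n \times n$ matrices over the field $F$, a one-sided inverse of a square matrix is automatically two-sided; hence $\Jac{t}{s}$ is invertible with inverse $\Jac{s}{t}$, which is exactly the assertion. One could instead interchange the roles of $\vect{s}$ and $\vect{t}$ throughout to produce the product in the opposite order directly.

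I expect no genuine difficulty here: the whole argument is a specialization of a formula already proved, followed by a one-line matrix interpretation. The only point demanding care is the index bookkeeping — the definition of $\Jac{f}{t}$ places the differentiation variable in the column index, so I must make sure that the summation variable $i$ in \eqref{eq:1st-partials} indeed plays the role of a column index for $A$ and a row index for $B$, so that the contraction over $i$ lines up as ordinary matrix multiplication rather than, say, a product involving a transpose.
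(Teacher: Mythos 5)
Your proposal is correct and matches the paper's own argument: the lemma is derived there in exactly this way, by setting $g=s_l$ in Identity~\eqref{eq:1st-partials}, using $\th{1}_{s_j}(s_l)=\delta_{jl}$, and reading the resulting relation as $\Jac{s}{t}\cdot\Jac{t}{s}=I_n$. Your care about which index is the row and which the column is well placed, and your bookkeeping is consistent with the paper's definition of $\Jac{f}{t}$.
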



Exploring Identity \eqref{eq:formula-for-theta-t-s} even more, we get

\begin{thm}\label{thm:inversion-formula-higher-derivatives}
Let $F$ be a field extension of $K$ of transcendence degree $n$, and let
$s_1,\ldots, s_n$ as well as $t_1,\ldots, t_n$ be two separating transcendence bases. Then for all $\vect{\mu}\in \NN^n\setminus \{\vect{0}\}$ 
the higher derivative $\th{\vect{\mu}}_{\vect{t}}(s_i)$ can be computed as a polynomial in 
all $\th{\vect{\lambda}}_{\vect{s}}(t_j)$  with $0<|\vect{\lambda}|\leq |\vect{\mu}|$, $1\leq j\leq n$ and in $\det(\Jac{t}{s})^{-1}$ with integer coefficients. Furthermore, the degree in $\det(\Jac{t}{s})^{-1}$ is bounded by $\binom{n+m}{n+1}$.
\end{thm}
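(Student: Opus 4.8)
The plan is to specialise the coordinate change formula \eqref{eq:formula-for-theta-t-s} and then solve the resulting functional equation recursively in $|\vect{\mu}|$. First I would set $g=s_i$ in \eqref{eq:formula-for-theta-t-s}; since $\theta_{\vect{s}}(s_i)=s_i+T_i$, and writing $h_j(\vect{T}):=\sum_{\vect{0}<\vect{\nu}}\th{\vect{\nu}}_{\vect{s}}(t_j)\,\vect{T}^{\vect{\nu}}$ for the (known) generating series of the $s$-derivatives of $t_j$, the formula collapses to the single identity
\[ s_i+T_i=\sum_{\vect{\lambda}\in\NN^n}\th{\vect{\lambda}}_{\vect{t}}(s_i)\prod_{j=1}^n h_j(\vect{T})^{\lambda_j} \]
in $F[[\vect{T}]]$. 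Because each $h_j$ has zero constant term, the product $\prod_j h_j^{\lambda_j}$ has order $|\vect{\lambda}|$, so comparing coefficients of $\vect{T}^{\vect{\mu}}$ only involves the unknowns $\th{\vect{\lambda}}_{\vect{t}}(s_i)$ with $|\vect{\lambda}|\le|\vect{\mu}|$; the system is therefore triangular in $|\vect{\lambda}|$ and can be solved by induction on $d:=|\vect{\mu}|$.

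For the base case $d=1$ the identity is just the statement that $\Jac{t}{s}$ and $\Jac{s}{t}$ are mutually inverse (the lemma above), so $\th{1}_{t_k}(s_i)$ is an entry of $\Jac{t}{s}^{-1}$, an integer polynomial in the first derivatives $\th{1}_{s_\ell}(t_j)$ divided once by $\det(\Jac{t}{s})$; this has degree $1=\binom{n+1}{n+1}$ in $\det(\Jac{t}{s})^{-1}$. For the inductive step I would collect the coefficient of $\vect{T}^{\vect{\mu}}$ with $|\vect{\mu}|=d\ge2$. The contributions with $|\vect{\lambda}|=d$ see only the linear parts of the $h_j$, i.e.\ $\vect{h}(\vect{T})\equiv A\vect{T}$ with $A:=\Jac{t}{s}$, and assemble into the matrix $M_{\vect{\mu},\vect{\lambda}}=[\vect{T}^{\vect{\mu}}]\prod_j (A\vect{T})_j^{\lambda_j}$, which is exactly the matrix of $\mathrm{Sym}^d(A)$ acting on degree-$d$ monomials; the contributions with $|\vect{\lambda}|<d$ are known by the inductive hypothesis. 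Hence the unknown vector $(\th{\vect{\lambda}}_{\vect{t}}(s_i))_{|\vect{\lambda}|=d}$ solves a square linear system with coefficient matrix $\mathrm{Sym}^d(A)$ and right-hand side an integer-polynomial combination of the $\th{\vect{\nu}}_{\vect{s}}(t_j)$ ($0<|\vect{\nu}|\le d$) and of the lower, already-known $\th{\vect{\lambda}}_{\vect{t}}(s_i)$.

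Solving by Cramer's rule, the inverse of $\mathrm{Sym}^d(A)$ is its adjugate (whose entries are integer polynomials in the entries of $A$, hence in the $\th{1}_{s_\ell}(t_j)$) divided by $\det(\mathrm{Sym}^d(A))$. The key computation is that $\det(\mathrm{Sym}^d(A))=\det(A)^{e_d}$ with $e_d=\binom{n+d-1}{n}$: this follows by passing to the eigenvalues of $A$, since the eigenvalues of $\mathrm{Sym}^d(A)$ are the degree-$d$ monomials in them and each eigenvalue of $A$ occurs with total exponent $\tfrac{d}{n}\binom{n+d-1}{n-1}=\binom{n+d-1}{n}$. Thus inversion introduces exactly the factor $\det(A)^{-e_d}$, and $\th{\vect{\mu}}_{\vect{t}}(s_i)$ comes out as an integer polynomial in the $\th{\vect{\lambda}}_{\vect{s}}(t_j)$ ($0<|\vect{\lambda}|\le d$) and in $\det(A)^{-1}$, proving the first assertion. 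For the degree bound, the right-hand side contributes at most the level-$(d-1)$ bound $\binom{n+d-1}{n+1}$ and the inversion contributes $e_d=\binom{n+d-1}{n}$, so the total is at most $\binom{n+d-1}{n+1}+\binom{n+d-1}{n}=\binom{n+d}{n+1}$ by Pascal's identity, i.e.\ the asserted bound with $m=|\vect{\mu}|$.

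The hard part will be the two determinant-theoretic facts feeding the degree count: recognising the top-degree coefficient matrix as $\mathrm{Sym}^d(A)$ and evaluating its determinant as the precise power $\det(A)^{\binom{n+d-1}{n}}$, and then seeing that the two binomial contributions telescope through Pascal's rule to give exactly $\binom{n+d}{n+1}$. Everything else — integrality of the multinomial coefficients appearing in $[\vect{T}^{\vect{\mu}}]\prod_j h_j^{\lambda_j}$, integrality of the adjugate entries, and the triangular bookkeeping in $|\vect{\lambda}|$ — is routine.
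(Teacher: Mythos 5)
Your proposal is correct and follows essentially the same route as the paper: specialise the coordinate-change formula to $g=s_i$, solve the resulting triangular system by induction on $|\vect{\mu}|$, identify the top-degree coefficient matrix as the $m$-th symmetric power of (the transpose of) $\Jac{t}{s}$, and evaluate its determinant as $\det(\Jac{t}{s})^{\binom{n+m-1}{n}}$ via eigenvalues — exactly the paper's Lemma \ref{lem:det-of-symmetric-power}. The only cosmetic difference is that you obtain the degree bound by Pascal's identity at each inductive step, while the paper telescopes the increments into the sum $\sum_{k=1}^{m}\binom{n+k-1}{n}=\binom{n+m}{n+1}$; these are the same computation.
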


\begin{proof}
We do induction on $m:=|\vect{\mu}|$.\\
We have already seen in the previous lemma that $\Jac{s}{t}=\Jac{t}{s}^{-1}$, and since by Cramer's rule the coefficients of the inverse are given by $\det(\Jac{t}{s})^{-1}$ times a polynomial in the coefficients of $\Jac{t}{s}$, the claim follows for $m=|\vect{\mu}|=1$.

For showing the claim for $m>1$, we use Formula \eqref{eq:formula-for-theta-t-s} for $g=s_i$ for $i\in \{1,\ldots, n\}$:
\[ s_i+T_i =  \sum_{\vect{\mu}\in \NN^n} \theta_{\vect{t}}^{(\vect{\mu})}(s_i)\cdot  \prod_{j=1}^n \left(
 \sum_{\substack{\vect{\nu}\in \NN^n\\ \vect{\nu}>\vect{0}}}  \theta_{\vect{s}}^{(\vect{\nu})}(t_j) \vect{T}^\vect{\nu} \right)^{\mu_j}. \]
Restricting to the total degree $m$, i.e.~ to the terms with monomials $\vect{T}^\vect{\lambda}$ where $|\vect{\lambda}|=m$, we get
\[ 0= \sum_{\substack{\vect{\mu}\in \NN^n\\ |\vect{\mu}|=m}} \theta_{\vect{t}}^{(\vect{\mu})}(s_i)\cdot
\prod_{j=1}^n \left( \sum_{l=1}^n \th{1}_{s_l}(t_j)T_l \right)^{\mu_j}
+ \sum_{\substack{\vect{\mu}\in \NN^n\\ 0<|\vect{\mu}|<m}} \theta_{\vect{t}}^{(\vect{\mu})}(s_i)\cdot
\sum_{\substack{\vect{\lambda}\in \NN^n\\ |\vect{\lambda}|=m}} Q_i(\vect{\mu},\vect{\lambda}) \vect{T}^\vect{\lambda}, \]
for appropriate $Q_i(\vect{\mu},\vect{\lambda})$ which are polynomials in $\th{\vect{\nu}}_{\vect{s}}(t_j)$ ($0<|\vect{\nu}|\leq m$, $1\leq j\leq n$).
Let $c_{\vect{\lambda},\vect{\mu}}$ denote the coefficient of $\vect{T}^\vect{\lambda}$ in
$\prod_{j=1}^n \left( \sum_{l=1}^n \th{1}_{s_l}(t_j)T_l \right)^{\mu_j}$, then
the last equality is equivalent to

\begin{equation}\label{eq:recursion-formula}
\left( c_{\vect{\lambda},\vect{\mu}}\right)_{\vect{\lambda},\vect{\mu}\in I_m} \cdot
\left(   \theta_{\vect{t}}^{(\vect{\mu})}(s_i) \right)_{\vect{\mu}\in I_m} = - \left(  \sum_{\substack{\vect{\mu'}\in \NN^n\\ 0<|\vect{\mu'}|<m}} \theta_{\vect{t}}^{(\vect{\mu'})}(s_i)\cdot
Q_i(\vect{\mu'},\vect{\lambda}) \right)_{\vect{\lambda}\in I_m},
\end{equation}
 where $I_m=\{ \vect{\nu}\in \NN^n \mid  |\vect{\nu}|=m \}$.
 Since by induction all $\theta_{\vect{t}}^{(\vect{\mu'})}(s_i)$ are polynomials in $\th{\vect{\lambda}}_{\vect{s}}(t_j)$ ($0<|\vect{\lambda}|\leq |\vect{\mu'}|$) and in $\det(\Jac{t}{s})^{-1}$, it suffices to show that the matrix $\left( c_{\vect{\lambda},\vect{\mu}}\right)_{\vect{\lambda},\vect{\mu}\in I_m}$ is invertible and that its determinant is a power of $
 \det(\Jac{t}{s})$.
 
Now, let $V$ be the $K$-vector space generated by $T_1,\ldots, T_n$ and consider the endomorphism $\varphi:V\to V, T_j\mapsto \sum_{l=1}^n \th{1}_{s_l}(t_j)T_l$. Then the transpose of $\Jac{t}{s}$ is just the matrix  representing $\varphi$ with respect to the basis $(T_1,\ldots, T_n)$.
By construction  $\left( c_{\vect{\lambda},\vect{\mu}}\right)_{\vect{\lambda},\vect{\mu}\in I_m}$ is nothing else then the matrix representing the $m$-th symmetric power of $\varphi$ with respect to the basis $\{ \vect{T}^{\vect{\mu}}\mid \mu\in I_m\}$. Hence, by Lemma \ref{lem:det-of-symmetric-power} below, we have
\[ \det\left( (c_{\vect{\lambda},\vect{\mu}})_{\vect{\lambda},\vect{\mu}}\right) =\det \left(\Jac{t}{s}\right)^{B(n,m)}\]
with $B(n,m)=\binom{n+m-1}{n}$ as in the lemma.

For getting the bound on the degree with respect to $\det(\Jac{t}{s})^{-1}$, we use Equation \eqref{eq:recursion-formula} to see that for $\vect{\mu}$ with $|\vect{\mu}|=m$ the degree of $\theta_{\vect{t}}^{(\vect{\mu})}(s_i)$ with respect to $\det(\Jac{t}{s})^{-1}$ is at most by $B(n,m)$ larger than the maximum of the degrees of all $\theta_{\vect{t}}^{(\vect{\mu'})}(s_i)$  with $|\vect{\mu'}|<m$. Hence, the degree is at most
$ \sum_{k=1}^{m} \binom{n+k-1}{n}= \binom{n+m}{n+1}.$
\end{proof}

\begin{lem}\label{lem:det-of-symmetric-power}
Let $V$ be a vector space over $K$ of finite dimension $n$, and $\varphi:V\to V$ an endomorphism. Let $S_m(\varphi):S_m(V)\to S_m(V)$ be the induced endomorphism on the $m$-th symmetric power of $V$. Then the determinant of $S_m(\varphi)$ is given by 
\[ \det\left( S_m(\varphi) \right) = \det(\varphi)^{B(n,m)} \]
where $B(n,m)=\binom{n+m-1}{n}$.
\end{lem}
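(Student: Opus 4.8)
The plan is to prove the stated equality first as an identity of polynomials with integer coefficients in the entries of a matrix representing $\varphi$, and then to read it off in the diagonalisable case over an algebraically closed field. Fix a basis of $V$ and let $A=(x_{ij})$ be the corresponding matrix of $\varphi$, regarded first as a matrix of indeterminates. In the monomial basis $\{\vect{v}^{\vect{a}}\mid \vect{a}\in\NN^n,\ |\vect{a}|=m\}$ of $S_m(V)$ the matrix of $S_m(\varphi)$ has entries that are polynomials with integer coefficients in the $x_{ij}$, obtained by expanding the products $\varphi(v_{i_1})\cdots\varphi(v_{i_m})$; hence $\det(S_m(\varphi))$ and $\det(A)^{B(n,m)}$ both lie in $\ZZ[x_{ij}]\subseteq \mathbb{C}[x_{ij}]$. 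Since this construction of $S_m$ is insensitive to the characteristic, it suffices to check that these two polynomials agree, which (the coefficients being integers) reduces to checking that they define the same function on $\Mat_n(\mathbb{C})$; the identity then specialises to every field $K$.

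Over $\mathbb{C}$ the matrices with $n$ distinct eigenvalues are Zariski-dense in $\Mat_n(\mathbb{C})$ and each such matrix is diagonalisable, so it is enough to verify the formula for a diagonalisable $\varphi$. Choose an eigenbasis $v_1,\ldots,v_n$ with $\varphi(v_i)=\lambda_i v_i$ and write $\vect{\lambda}=(\lambda_1,\ldots,\lambda_n)$. The monomials $\vect{v}^{\vect{a}}=v_1^{a_1}\cdots v_n^{a_n}$ with $|\vect{a}|=m$ form a basis of $S_m(V)$, and
\[ S_m(\varphi)(\vect{v}^{\vect{a}})=\varphi(v_1)^{a_1}\cdots\varphi(v_n)^{a_n}=\vect{\lambda}^{\vect{a}}\,\vect{v}^{\vect{a}}. \]
Thus $S_m(\varphi)$ is diagonal in this basis, whence
\[ \det\bigl(S_m(\varphi)\bigr)=\prod_{|\vect{a}|=m}\vect{\lambda}^{\vect{a}}=\prod_{i=1}^n\lambda_i^{N_i},\qquad N_i=\sum_{|\vect{a}|=m}a_i. \]

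It then remains to evaluate the exponents $N_i$, which is a short combinatorial computation. By the symmetry of the index set $\{\vect{a}\in\NN^n\mid |\vect{a}|=m\}$ under permutation of coordinates, all $N_i$ equal a common value $N$, and summing over $i$ gives
\[ nN=\sum_{i=1}^n N_i=\sum_{|\vect{a}|=m}|\vect{a}|=m\cdot\#\{\vect{a}\in\NN^n\mid |\vect{a}|=m\}=m\binom{n+m-1}{n-1}. \]
Hence $N=\tfrac{m}{n}\binom{n+m-1}{n-1}=\binom{n+m-1}{n}=B(n,m)$, so $\det(S_m(\varphi))=(\lambda_1\cdots\lambda_n)^{N}=\det(\varphi)^{B(n,m)}$, which finishes the dense-subset check and therefore the general case. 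The one point that genuinely requires care is the reduction of the first paragraph: because $K$ may be finite or imperfect, $\varphi$ need not even be triangularisable over $K$ and no density argument is available within $K$ itself, so one must argue at the level of the universal polynomial identity over $\ZZ$ and only then specialise to $K$. Everything after that is a direct eigenvalue computation.
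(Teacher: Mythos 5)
Your proof is correct, and the final eigenvalue computation is essentially the one in the paper: there too one arrives at $\det(S_m(\varphi))=\prod_{|\vect{\mu}|=m}\lambda_1^{\mu_1}\cdots\lambda_n^{\mu_n}$ and then identifies the exponent as $\binom{n+m-1}{n}$ by a degree count (the paper computes the total degree $m\cdot\#I_m$ of the product and divides by $n$, whereas you compute each exponent $N_i$ by the permutation symmetry of the index set; these are the same calculation). Where you genuinely diverge is the reduction step. The paper simply extends scalars to an algebraic closure of $K$ (determinants are unchanged under base change), picks a Jordan basis, and observes that $S_m(\varphi)$ is again triangular in the lexicographic ordering of the monomial basis with diagonal entries $\vect{\lambda}^{\vect{\mu}}$ --- so no density or characteristic-zero argument is needed at all, and the formula is read off directly over $\bar K$. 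You instead set up a universal identity in $\ZZ[x_{ij}]$ and verify it on the Zariski-dense set of diagonalisable matrices over $\mathbb{C}$. Both routes are valid; yours trades the (small, but unproved-in-the-paper) observation that the symmetric power of a triangular map is triangular for the standard specialisation-from-$\ZZ$ machinery, while the paper's is shorter and stays entirely within the given field after one base change. Your closing remark that no density argument is available within $K$ itself is well taken, but note that passing to $\bar K$ already suffices and is the paper's way around it.
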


Although this seems to be well known, we  couldn't find a proof of this fact in the literature. Hence, we give one here.

\begin{proof}
Since determinants are invariant under extension of scalars, we can assume that $K$ is algebraically closed, and we can choose a Jordan basis $b_1,\ldots, b_n$ for $\varphi$ with eigenvalues $\lambda_1,\ldots, \lambda_n$. Hence, $\det(\varphi)=\lambda_1\lambda_2\cdots \lambda_n$.\\
A basis of the $m$-th symmetric power is given by $\{ b_1^{\mu_1}\cdots b_n^{\mu_n} \mid \vect{\mu}\in I_m\}$ where $I_m=\{ \vect{\mu}\in \NN^n \mid \sum_{i=1}^n \mu_i=m\}$.
If we order the basis lexicographically the matrix representing $S_m(\varphi)$ is again triangular with diagonal entries
$\{ \lambda_1^{\mu_1}\cdots \lambda_n^{\mu_n} \mid  \vect{\mu}\in I_m\}$.
Hence,
\begin{equation}\label{eq:det-of-symmetric-power}
\det\left( S_m(\varphi)\right) =\prod_{\vect{\mu}\in I_m}  \lambda_1^{\mu_1}\cdots \lambda_n^{\mu_n}. 
\end{equation}
As the last expression is symmetric in $\lambda_1,\ldots, \lambda_n$, it is some power of $\det(\varphi)=\lambda_1\lambda_2\cdots \lambda_n$. 
As the total degree of $\prod_{\vect{\mu}\in I_m}  \lambda_1^{\mu_1}\cdots \lambda_n^{\mu_n}$ is $m\cdot \#I_m$ and the total degree of  $\det(\varphi)=\lambda_1\lambda_2\cdots \lambda_n$ is $n$, the desired power is $\frac{m}{n}\cdot \#I_m$.
Finally $\#I_m=\binom{n+m-1}{m}$, and hence $\det\left( S_m(\varphi) \right) = \det(\varphi)^{B(n,m)}$ for $B(n,m)=\frac{m}{n}\binom{n+m-1}{m}=\binom{n+m-1}{m-1}=\binom{n+m-1}{n}$.
\end{proof}

We have seen in Lemma \ref{lem:unique-extension} that we can extend the higher derivation $\theta_{\vect{t}}$ on $K[t_1,\ldots,t_n]$ to a finite extension $R$ if the extension is \'etale. In the following we have another criterion when a tuple $\vect{t}=(t_1,\ldots,t_n)$ of elements in $R$ define a higher derivation on $R$.

\begin{thm}\label{thm:extension-in-ring-case}
Let $F/K(s_1,\ldots, s_n)$ be a separable algebraic extension and $R\subseteq F$ a $K$-subalgebra such that $\theta_{\vect{s}}:F\to F[[\vect{T}]]$ restricts to a higher derivation on $R$.
Assume that $t_1,\ldots, t_n\in R$ are such that the determinant of $\Jac{t}{s}$ is invertible in $R$. Then
$t_1,\ldots, t_n$ are algebraically independent over $K$ and the higher derivation with respect to $\vect{t}$ on $K[t_1,\ldots, t_n]$ can uniquely be extended to an iterative higher derivation on $R$.
\end{thm}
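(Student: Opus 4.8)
The plan is to first upgrade the hypotheses to the statement that $t_1,\ldots, t_n$ form a \emph{separating} transcendence basis of $F/K$ (which in particular yields their algebraic independence), and then to combine the inversion formula of Theorem~\ref{thm:inversion-formula-higher-derivatives} with the coordinate change formula of Theorem~\ref{thm:change-of-coordinates} to show that $\theta_{\vect{t}}$ maps $R$ into $R[[\vect{T}]]$. Uniqueness and iterativity will then follow from Lemma~\ref{lem:unique-extension}.

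For the transcendence statement I would pass to the module of K\"ahler differentials $\Omega_{F/K}$. The hypothesis that $F/K(s_1,\ldots,s_n)$ is separable algebraic means exactly that $\vect{s}$ is a separating transcendence basis, so $\Omega_{F/K}$ is an $F$-vector space of dimension $n$ with basis $ds_1,\ldots, ds_n$. Writing $dt_i=\sum_{j=1}^n \th{1}_{s_j}(t_i)\,ds_j$, the matrix transforming $(ds_j)_j$ into $(dt_i)_i$ is precisely $\Jac{t}{s}$, which is invertible over $R$ (its determinant is a unit) and hence over $F$. Thus $dt_1,\ldots, dt_n$ is again an $F$-basis of $\Omega_{F/K}$, and by the standard correspondence between separating transcendence bases and bases of the differential module, $t_1,\ldots, t_n$ is a separating transcendence basis of $F/K$; in particular the $t_i$ are algebraically independent over $K$. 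Consequently $\theta_{\vect{t}}$ is defined on all of $F$ (Lemma~\ref{lem:unique-extension}, as in Def.~\ref{def:theta-t}), and Theorem~\ref{thm:inversion-formula-higher-derivatives} applies to the pair $\vect{s},\vect{t}$.

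Next I would establish $\theta_{\vect{t}}(s_i)\in R[[\vect{T}]]$ for each $i$. By Theorem~\ref{thm:inversion-formula-higher-derivatives}, every coefficient $\th{\vect{\mu}}_{\vect{t}}(s_i)$ is an integer-coefficient polynomial in the quantities $\th{\vect{\lambda}}_{\vect{s}}(t_j)$ (with $1\le j\le n$) and in $\det(\Jac{t}{s})^{-1}$. Since $\theta_{\vect{s}}$ restricts to a higher derivation on $R$ and $t_j\in R$, all the $\th{\vect{\lambda}}_{\vect{s}}(t_j)$ lie in $R$, and $\det(\Jac{t}{s})^{-1}\in R$ by hypothesis. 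Hence every coefficient of $\theta_{\vect{t}}(s_i)$ lies in $R$, i.e.\ $\theta_{\vect{t}}(s_i)\in R[[\vect{T}]]$; moreover $\theta_{\vect{t}}(s_i)-s_i$ has coefficients in $R$ and vanishing constant term.

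Finally I would propagate this from the $s_i$ to all of $R$ via the coordinate change formula. Applying Theorem~\ref{thm:change-of-coordinates} with the roles of $\vect{s}$ and $\vect{t}$ interchanged gives, for every $g\in R$,
\[ \theta_{\vect{t}}(g)=\theta_{\vect{s}}(g)|_{\vect{T}=\theta_{\vect{t}}(\vect{s})-\vect{s}}. \]
Here $\theta_{\vect{s}}(g)\in R[[\vect{T}]]$ by assumption, and we substitute the power series $\theta_{\vect{t}}(s_i)-s_i\in R[[\vect{T}]]$, which have zero constant term, so the substitution is $\vect{T}$-adically well defined and stays inside $R[[\vect{T}]]$. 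Therefore $\theta_{\vect{t}}(g)\in R[[\vect{T}]]$, so $\theta_{\vect{t}}$ restricts to a higher derivation on $R$ extending the one on $K[t_1,\ldots,t_n]$; it is iterative because $\theta_{\vect{t}}$ on $F$ is iterative by Lemma~\ref{lem:unique-extension} and iterativity is inherited by restriction. For uniqueness, any higher derivation on $R$ extending $\theta_{\vect{t}}$ on $K[t_1,\ldots,t_n]$ extends uniquely to $\Quot(R)$, which is separable algebraic over $K(t_1,\ldots,t_n)$, so by the uniqueness in Lemma~\ref{lem:unique-extension} it must coincide with $\theta_{\vect{t}}$. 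I expect the main obstacle to be this last step: justifying the interchange of the two bases in Theorem~\ref{thm:change-of-coordinates} and verifying that the substitution $\vect{T}\mapsto \theta_{\vect{t}}(\vect{s})-\vect{s}$ is legitimate and preserves $R[[\vect{T}]]$, which is exactly where the vanishing constant term established in the previous step is used.
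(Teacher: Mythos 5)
Your proposal is correct and follows essentially the same route as the paper: the differential-module argument showing $dt_1,\ldots,dt_n$ form a basis of $\Omega_{F/K}$ (hence $\vect{t}$ is a separating transcendence basis), then Theorem~\ref{thm:inversion-formula-higher-derivatives} to place all $\th{\vect{\mu}}_{\vect{t}}(s_i)$ in $R$, and finally Theorem~\ref{thm:change-of-coordinates} with the roles of $\vect{s}$ and $\vect{t}$ reversed to conclude that $\theta_{\vect{t}}$ preserves $R$, with uniqueness and iterativity from Lemma~\ref{lem:unique-extension}. Your additional care about the substitution $\vect{T}\mapsto\theta_{\vect{t}}(\vect{s})-\vect{s}$ being $\vect{T}$-adically legitimate (vanishing constant terms) is a detail the paper leaves implicit, but it is exactly the right justification.
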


\begin{proof}
As $F$ is a separable algebraic extension of $K(s_1,\ldots,s_n)$, the differentials $ds_1,\ldots, d s_n$ form a $F$-basis of $\Omega_{F/K}$. As 
$\Jac{t}{s}$ is invertible in $F$, also $dt_1,\ldots, dt_n$ form a $F$-basis of $\Omega_{F/K}$ which means that $F$ is separable algebraic over $K(t_1,\ldots, t_n)$. Hence by Lemma \ref{lem:unique-extension}, the higher derivation with respect to $\vect{t}=(t_1,\ldots, t_n)$ on $K[t_1,\ldots, t_n]$ can uniquely be extended to an iterative higher derivation on $F$, and it remains to prove that $R$ is stable under those.\\
By Theorem \ref{thm:inversion-formula-higher-derivatives}, all $\th{\vect{\mu}}_{\vect{t}}(s_i)$ lie in $R$, as they can be given as polynomials in $\th{\vect{\lambda}}_{\vect{s}}(t_j)\in R$ and in $\det(\Jac{t}{s})^{-1}\in R$.
Finally, Theorem \ref{thm:change-of-coordinates} with roles of $\vect{t}$ and $\vect{s}$ reversed, shows that indeed $\th{\vect{\mu}}_{\vect{t}}(g)$ is an element of $R$ for every $g\in R$.
\end{proof}

As a corollary, we get the following important case of polynomials which is already present in \cite[Thm.~2.13]{sk-so:pcawkc}.

\begin{cor}\label{cor:extension-in-polynomial-case}
Let $x_1,\ldots, x_n$ be indeterminates, and $f_1,\ldots, f_n\in K[x_1,\ldots, x_n]$ such that 
$\det(J(\vect{f},\vect{x}))\in K[x_1,\ldots, x_n]^\times=K^\times$. 
Then the higher derivation $\theta_\vect{f}$ on $K[f_1,\ldots, f_n]$ can uniquely be extended to $K[x_1,\ldots, x_n]$.
\end{cor}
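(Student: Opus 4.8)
The plan is to obtain this statement as a direct specialization of Theorem \ref{thm:extension-in-ring-case}, taking the indeterminates $x_1,\ldots, x_n$ in the role of the separating basis $\vect{s}$ and the given polynomials $f_1,\ldots, f_n$ in the role of $\vect{t}$.

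First I would set $F:=K(x_1,\ldots, x_n)$ and $R:=K[x_1,\ldots, x_n]$ and check the standing hypotheses of Theorem \ref{thm:extension-in-ring-case}. The extension $F/K(x_1,\ldots, x_n)$ is the trivial one, hence separable algebraic. The higher derivation $\theta_{\vect{x}}$ of Example \ref{ex:ID-rings} is defined on $F$, since $F$ is a localization of $K[x_1,\ldots, x_n]$ and therefore an \'etale extension of it, so that Lemma \ref{lem:unique-extension} applies. Moreover $\theta_{\vect{x}}$ restricts to a higher derivation on $R$, because $\theta_{\vect{x}}(x_i)=x_i+T_i$ lies in $R[[\vect{T}]]$ and $\theta_{\vect{x}}$ is a ring homomorphism, whence $\theta_{\vect{x}}(R)\subseteq R[[\vect{T}]]$.

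Next I would verify the invertibility hypothesis. With $\vect{t}=\vect{f}$ one has $\Jac{f}{x}=J(\vect{f},\vect{x})$, and by assumption $\det(J(\vect{f},\vect{x}))\in K^\times\subseteq R^\times$, i.e.~it is a unit in $R$. All hypotheses of Theorem \ref{thm:extension-in-ring-case} being satisfied, the theorem delivers both conclusions simultaneously: the elements $f_1,\ldots, f_n$ are algebraically independent over $K$, so that $K[f_1,\ldots, f_n]$ is a polynomial ring on which $\theta_{\vect{f}}$ is defined, and $\theta_{\vect{f}}$ extends uniquely to an iterative higher derivation on $R=K[x_1,\ldots, x_n]$, which is exactly the assertion.

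Since all the substantive content already resides in Theorem \ref{thm:extension-in-ring-case}, I do not expect any genuine obstacle. The only point deserving a moment's care is purely one of bookkeeping: the algebraic independence of the $f_i$, which is what makes the symbol $\theta_{\vect{f}}$ meaningful in the first place, is not an input to the corollary but is itself produced by the theorem. In the write-up one should therefore invoke that conclusion before referring to $\theta_{\vect{f}}$ on $K[f_1,\ldots, f_n]$.
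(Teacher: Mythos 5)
Your proposal is correct and is essentially the paper's own route: the corollary is stated as an immediate specialization of Theorem~\ref{thm:extension-in-ring-case} with $\vect{s}=\vect{x}$, $\vect{t}=\vect{f}$, $F=K(x_1,\ldots,x_n)$ and $R=K[x_1,\ldots,x_n]$, which is exactly what you do, and your remark that the algebraic independence of the $f_i$ is an output of the theorem rather than a hypothesis is a sensible point of bookkeeping. (The paper additionally reproves the statement over an arbitrary commutative ring in Prop.~\ref{prop:unique-extension-over-ring} via \'etaleness, but that is not needed for the corollary as stated.)
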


\begin{rem}
Theorem 2.13 in \cite{sk-so:pcawkc} states the unique existence of the extension of  $\theta_\vect{f}$ to $K[x_1,\ldots, x_n]$ for any commutative ring $K$ if additionally the following properties are fulfilled.
\begin{enumerate}
\item If $K$ has characteristic $p^e$ ($p$ prime) then for all $L\geq 0$, $i=1,\ldots, n$ and $N=e+\ord_p(L!)$ one has \[\th{L}_{f_i}\left( \sum_{\vect{0}\leq \vect{\alpha}\leq \vect{p^N\!\!-\!\!1}} \hspace*{-8pt} a_\vect{\alpha}(x_1^{p^N},\ldots, x_n^{p^N}) \vect{f}^\vect{\alpha}\right) = \hspace*{-4pt} \sum_{\vect{0}\leq \vect{\alpha}\leq \vect{p^N\!\!-\!\!1}}\hspace*{-8pt} a_\vect{\alpha}(x_1^{p^N},\ldots, x_n^{p^N}) \binom{\alpha_i}{L} \vect{f}^{\vect{\alpha}-L\vect{e_i}}, \]
\item if $L!$ is not a zero-divisor in $K$, then $L!\cdot \th{L}_{f_i}=\left(\frac{\partial}{\partial f_i}\right)^L$, as well as
\item if $\phi:K\to K'$ is a homomorphism of commutative rings, then the following diagram commutes.

\centerline{
\xymatrix{ K[x_1,\ldots, x_n] \ar[r]^(0.45){\theta_\vect{f}} \ar[d]
& K[x_1,\ldots, x_n][[\vect{T}]] \ar[d] \\
K'[x_1,\ldots, x_n] \ar[r]^(0.45){\theta_{\phi(\vect{f})}} & K'[x_1,\ldots, x_n][[\vect{T}]]
}}
\noindent  where the vertical maps are the homomorphisms induced by $\phi$.
\end{enumerate}

Our approach leads to a much shorter proof of their theorem, and even shows stronger statements which are given in the following proposition.
\end{rem}

\begin{prop}\label{prop:unique-extension-over-ring}
For any commutative ring $A$ and $f_1,\ldots,f_n\in A[x_1,\ldots, x_n]$ such that $\det(J(\vect{f},\vect{x}))\in A[x_1,\ldots, x_n]^\times$ the iterative higher derivation $\theta_\vect{f}:A[f_1,\ldots,f_n]\to A[f_1,\ldots,f_n][[T]]$ uniquely extends to a higher derivation on $A[x_1,\ldots, x_n]$ (also called $\theta_{\vect{f}}$). This extension fulfills the following properties:
\begin{enumerate}
\item \label{item:prime-power} If $A$ has characteristic $p^e$ ($p$ prime) then for all $L\geq 0$, $i=1,\ldots, n$, and $N\in \NN$ such that $p^{N-e+1}>L$, one has \[\th{L}_{f_i}\left( \sum_{\vect{0}\leq \vect{\alpha}\leq \vect{p^N\!\!-\!\!1}} \hspace*{-8pt} a_\vect{\alpha}(x_1^{p^N},\ldots, x_n^{p^N}) \vect{f}^\vect{\alpha}\right) = \hspace*{-4pt} \sum_{\vect{0}\leq \vect{\alpha}\leq \vect{p^N\!\!-\!\!1}} \hspace*{-8pt} a_\vect{\alpha}(x_1^{p^N},\ldots, x_n^{p^N})\binom{\alpha_i}{L} \vect{f}^{\vect{\alpha}-L\vect{e_i}}. \]
In particular, this holds for $N=e+\ord_p(L!)$.
\item \label{item:iterated} $\theta_\vect{f}$ is an iterative higher derivation on $A[x_1,\ldots,x_n]$, in particular for all $L>0$,
\[L!\cdot \th{L}_{f_i}=\left(\tfrac{\partial}{\partial f_i}\right)^L.\]
\item \label{item:functorial}  If $\phi:A\to A'$ is a homomorphism of commutative rings, then the following diagram commutes.

\centerline{
\xymatrix{ A[x_1,\ldots, x_n] \ar[r]^(0.45){\theta_\vect{f}} \ar[d]
& A[x_1,\ldots, x_n][[\vect{T}]] \ar[d] \\
A'[x_1,\ldots, x_n] \ar[r]^(0.45){\theta_{\phi(\vect{f})}} & A'[x_1,\ldots, x_n][[\vect{T}]]
}} 
\noindent where the vertical maps are the homomorphisms induced by $\phi$.
\end{enumerate}
\end{prop}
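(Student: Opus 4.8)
The plan is to reduce existence and uniqueness to the fact that $A[x_1,\dots,x_n]$ is \'etale over $A[f_1,\dots,f_n]$ together with Lemma~\ref{lem:unique-extension}, and then to verify the three listed properties by a formal uniqueness argument and one explicit computation. Before invoking that lemma I must know that $A[f_1,\dots,f_n]$ is genuinely a polynomial ring, i.e.\ that $f_1,\dots,f_n$ are algebraically independent over $A$; over a general ring the differential argument of Theorem~\ref{thm:extension-in-ring-case} is unavailable. Since $\det(\Jac{f}{x})$ is a unit in $A[x_1,\dots,x_n]$, evaluating at $\vect{x}=\vect{0}$ shows its constant term is a unit in $A$, so the constant Jacobian $J(\vect{0})$ is invertible over $A$. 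Replacing $(f_1,\dots,f_n)^{t}$ by $J(\vect{0})^{-1}(f_1,\dots,f_n)^{t}$ and subtracting the constants $f_i(\vect{0})$ — operations changing neither algebraic independence nor the unit-Jacobian hypothesis — I may assume $f_i=x_i+g_i$ with each $g_i$ of order $\ge 2$. Then for nonzero $P\in A[y_1,\dots,y_n]$ the lowest-degree homogeneous part of $P(\vect{f})$ coincides with that of $P(\vect{x})$, which is nonzero because the monomials form an $A$-basis; hence $P(\vect{f})\ne 0$ and $A[f_1,\dots,f_n]\cong A[y_1,\dots,y_n]$ via $y_i\mapsto f_i$.

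Writing $A[x_1,\dots,x_n]=A[y_1,\dots,y_n][x_1,\dots,x_n]/(f_i(\vect{x})-y_i\mid i)$ as an $A[\vect{y}]$-algebra, the Jacobian of the defining relations is $\Jac{f}{x}$, with unit determinant, so by the Jacobian criterion the extension $A[x_1,\dots,x_n]/A[f_1,\dots,f_n]$ is \'etale. Lemma~\ref{lem:unique-extension} then produces the unique extension of the iterative higher derivation $\theta_{\vect{f}}$ to $A[x_1,\dots,x_n]$ and shows it stays iterative; this settles existence, uniqueness and the iterativity in part~\ref{item:iterated}. The identity $L!\cdot\th{L}_{f_i}=(\partial/\partial f_i)^L$ is then purely formal: iterativity gives $\th{1}_{f_i}\circ\th{k}_{f_i}=(k+1)\,\th{k+1}_{f_i}$, whence $(\th{1}_{f_i})^{L}=L!\,\th{L}_{f_i}$ by induction, and $\th{1}_{f_i}=\partial/\partial f_i$ by definition.

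For functoriality (part~\ref{item:functorial}), note first that $\phi$ carries $\det(\Jac{f}{x})$ to $\det J(\phi(\vect{f}),\vect{x})$, again a unit, so $\theta_{\phi(\vect{f})}$ exists on $A'[x_1,\dots,x_n]$ by the case already proved. Let $\phi_{*}\colon A[\vect{x}]\to A'[\vect{x}]$ and $\phi_{**}\colon A[\vect{x}][[\vect{T}]]\to A'[\vect{x}][[\vect{T}]]$ be induced by $\phi$, and set $\Psi=\phi_{**}\circ\theta_{\vect{f}}$ and $\Psi'=\theta_{\phi(\vect{f})}\circ\phi_{*}$. Both are ring homomorphisms $A[\vect{x}]\to A'[\vect{x}][[\vect{T}]]$ reducing to $\phi_{*}$ modulo $\vect{T}$, and both send $f_i\mapsto\phi(f_i)+T_i$; hence both are $A[f_1,\dots,f_n]$-algebra maps (for the target structure $f_i\mapsto\phi(f_i)+T_i$) lifting $\phi_{*}$. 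By the formal \'etaleness of $A[\vect{x}]/A[f_1,\dots,f_n]$ underlying Lemma~\ref{lem:unique-extension}, such a lift is unique, so $\Psi=\Psi'$, which is exactly the asserted commutativity.

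The computational core, and the step I expect to be the main obstacle, is the prime-power formula of part~\ref{item:prime-power}. I would work with the univariate constituent $\theta_{f_i}$ (set $T_j=0$ for $j\ne i$), for which $\theta_{f_i}(f_i)=f_i+T_i$ and $\theta_{f_i}(f_k)=f_k$ for $k\ne i$. The crux is that $x_j^{p^N}$ is $\theta_{f_i}$-invariant modulo $T_i^{p^{N-e+1}}$: writing $\theta_{f_i}(x_j)=x_j+v$ with $v\in T_i\,A[\vect{x}][[T_i]]$,
\[ \theta_{f_i}(x_j^{p^N})=(x_j+v)^{p^N}=\sum_{r=0}^{p^N}\binom{p^N}{r}x_j^{p^N-r}v^{r}, \]
and by Kummer's formula $\ord_p\binom{p^N}{r}=N-\ord_p(r)\ge e$ for every $1\le r<p^{N-e+1}$, so in characteristic $p^e$ all these coefficients vanish; as $v^{r}$ has $T_i$-order $\ge r$, the coefficients of $T_i,\dots,T_i^{p^{N-e+1}-1}$ in $\theta_{f_i}(x_j^{p^N})$ are zero. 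Applying the ring homomorphism $\theta_{f_i}$ modulo $T_i^{p^{N-e+1}}$ propagates this invariance to every $a_{\vect{\alpha}}(x_1^{p^N},\dots,x_n^{p^N})$. Finally, expanding $\theta_{f_i}\big(\sum_{\vect{\alpha}}a_{\vect{\alpha}}(\dots)\,\vect{f}^{\vect{\alpha}}\big)=\sum_{\vect{\alpha}}\theta_{f_i}(a_{\vect{\alpha}})\,(f_i+T_i)^{\alpha_i}\prod_{k\ne i}f_k^{\alpha_k}$ and reading off the coefficient of $T_i^{L}$ for $L<p^{N-e+1}$, only the $T_i$-constant part $a_{\vect{\alpha}}(\dots)$ survives against the term $\binom{\alpha_i}{L}\vect{f}^{\vect{\alpha}-L\vect{e_i}}T_i^{L}$, giving the claimed right-hand side. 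That $N=e+\ord_p(L!)$ meets the hypothesis $p^{N-e+1}>L$ follows from the elementary bound $\ord_p(L!)\ge\lfloor\log_p L\rfloor$.
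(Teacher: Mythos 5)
Your proof is correct and follows essentially the same route as the paper: \'etaleness of $A[x_1,\ldots,x_n]$ over $A[f_1,\ldots,f_n]$ via the Jacobian criterion together with Lemma~\ref{lem:unique-extension} for existence, uniqueness and iterativity; the Frobenius-power/binomial-coefficient computation for part (i); and uniqueness of lifts along the (formally) \'etale extension for part (iii). Your explicit verification that $f_1,\ldots,f_n$ are algebraically independent over $A$ (so that $\theta_{\vect{f}}$ is actually well defined on $A[f_1,\ldots,f_n]$) is a detail the paper leaves implicit and is a worthwhile addition, but it does not change the overall strategy.
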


\begin{proof}
Since the Jacobian matrix $\Jac{f}{x}$ is invertible in $S=A[x_1,\ldots, x_n]$, the extension $R=A[f_1,\ldots, f_n]\hookrightarrow S=A[x_1,\ldots, x_n]=R[x_1,\ldots, x_n]/(f_1,\ldots, f_n)$ is standard smooth with $\Omega_{S/R}=0$ (see \cite[Tag 00T7]{stacks-project}), and hence \'etale.
Therefore by Lemma \ref{lem:unique-extension}, the higher derivation $\theta_{\vect{f}}$ on $R=A[f_1,\ldots, f_n]$ uniquely extends to $S=A[x_1,\ldots, x_n]$ and the extension is again iterative. It remains to verify properties \ref{item:prime-power} and \ref{item:functorial} given above.

For showing Part \ref{item:prime-power}, we recognize that in characteristic $p^e$ one has
$\theta_{\vect{f}}(g^{p^N})=\theta_{\vect{f}}(g)^{p^N}\in (S[[\vect{T}]])^{p^N}\subseteq S[[T_1^{p^{N-e+1}},\ldots, T_n^{p^{N-e+1}}]]$, and hence $\th{j}_{f_i}(x^{p^N})=0$ for all $0<j<p^{N-e+1}$. Applying the Leibniz rule and the definition of $\theta_{\vect{f}}$ on $R=A[f_1,\ldots,f_n]$, we obtain the claimed equation.
As $p^{\ord_p(L!)+1}>L$ for every $L>0$, this holds for $N=e+\ord_p(L!)$.


Part \ref{item:functorial} is clear by uniqueness of the extended $\theta_{\vect{f}}$. 
\end{proof}

\section{Inversion formula for polynomial automorphisms}\label{sec:inversion-formula}

In this section, we will use the previous to compute the inverse map of a polynomial automorphism.

So from now on let $A[x_1,\ldots, x_n]$ be a polynomial ring in $n$ variables over the commutative ring $A$ and $F:A[x_1,\ldots, x_n]\to A[x_1,\ldots, x_n]$ an endomorphism given by the polynomials $f_j(x_1,\ldots, x_n):=F(x_j)\in A[x_1,\ldots, x_n]$.
Further, we assume that the Jacobian matrix
\[ \Jac{f}{x}= \begin{pmatrix} \frac{\partial f_1}{\partial x_1} & \frac{\partial f_1}{\partial x_2} &\cdots & \frac{\partial f_1}{\partial x_n}\\
\vdots & \ddots & & \vdots \\
\frac{\partial f_n}{\partial x_1} & \frac{\partial f_n}{\partial x_2} &\cdots & \frac{\partial f_n}{\partial x_n}
\end{pmatrix} \]
is invertible over $A[x_1,\ldots, x_n]$, i.e.~that $\det(  \Jac{f}{x})\in A[x_1,\ldots, x_n]^\times$.

By Corollary \ref{cor:extension-in-polynomial-case} resp.~Proposition \ref{prop:unique-extension-over-ring}, in this case the higher derivation $\theta_{\vect{f}}$ which a priori is only defined on $A[f_1,\ldots, f_n]$ can be extended  uniquely to a higher derivation on $A[x_1,\ldots, x_n]$.

After the affine transformation $f_i\mapsto f_i-f_i(0,\ldots,0)$ we can furthermore assume that $f_i\equiv 0$ modulo the ideal ${(x_1,\ldots, x_n)}$. In this  case $F$ can be extended continuously to an endomorphism $F:A[[x_1,\ldots, x_n]]\to A[[x_1,\ldots, x_n]]$. Moreover, by the formal inverse function theorem (see \cite[Thm.~1.1.2]{avde:pajc}), this endomorphism is even an automorphism as $\Jac{f}{x}$ is invertible.

\begin{thm}\label{thm:formula-for-formal-inverse}
The inverse $G$ of the automorphism $F:A[[x_1,\ldots, x_n]]\to A[[x_1,\ldots, x_n]]$ is given by
\[ G(h)= \left(\theta_\vect{f}(h)|_{\vect{x}=\vect{0}}\right)_{\vect{T}=\vect{x}}
=\sum_{\vect{\mu}\in\NN^n} \th{\vect{\mu}}_{\vect{f}}(h)|_{\vect{x}=\vect{0}} \vect{x}^\vect{\mu}
 \quad \text{for all }h\in A[[x_1,\ldots, x_n]], \]
 where $\theta_\vect{f}:A[[\vect{x}]]\to A[[\vect{x}]][[\vect{T}]]$ is the continuous extension of $\theta_\vect{f}$ above.
\end{thm}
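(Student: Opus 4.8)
The plan is to verify directly that $G$ as defined is a two-sided inverse to $F$, or more efficiently, to show that $G\circ F = \id$ on the generators $x_i$ and that $G$ is a continuous ring homomorphism, since then $G\circ F$ agrees with the identity on all of $A[[x_1,\ldots,x_n]]$ by continuity and the universal property of the power series ring. Because $F$ is already known to be an automorphism, establishing that $G$ is a ring homomorphism satisfying $G\circ F=\id$ suffices to conclude $G=F^{-1}$.

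First I would check that $G$ is a ring homomorphism. The map $h\mapsto \theta_{\vect{f}}(h)|_{\vect{x}=\vect{0}}$ is the composition of the ring homomorphism $\theta_{\vect{f}}:A[[\vect{x}]]\to A[[\vect{x}]][[\vect{T}]]$ with the evaluation homomorphism $\vect{x}=\vect{0}$, landing in $A[[\vect{T}]]$; the final substitution $\vect{T}=\vect{x}$ is again a (continuous) ring homomorphism $A[[\vect{T}]]\to A[[\vect{x}]]$. Hence $G$ is a composition of continuous ring homomorphisms, so it is itself a continuous ring homomorphism. This reduces the whole theorem to computing $G(F(x_i))=G(f_i)$ and showing it equals $x_i$.

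The key computation is therefore evaluating $G(f_i)$. By the very definition of $\theta_{\vect{f}}$ as the higher derivation with respect to $\vect{f}$ (Def.~\ref{def:theta-t}, Example \ref{ex:ID-rings}\ref{item:multi-der by t}), one has $\theta_{\vect{f}}(f_i)=f_i+T_i$, exactly as $\theta_{\vect{t}}(t_i)=t_i+T_i$ for a coordinate $t_i$; the point of Corollary \ref{cor:extension-in-polynomial-case} and Proposition \ref{prop:unique-extension-over-ring} is precisely that this $\theta_{\vect{f}}$ extends uniquely from $A[f_1,\ldots,f_n]$ to $A[[\vect{x}]]$, and on the subring $A[f_1,\ldots,f_n]$ the $f_i$ behave as the coordinates. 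Thus $\theta_{\vect{f}}(f_i)|_{\vect{x}=\vect{0}}=(f_i+T_i)|_{\vect{x}=\vect{0}}=f_i(\vect{0})+T_i=T_i$, using the normalization $f_i\equiv 0 \bmod (x_1,\ldots,x_n)$. Substituting $\vect{T}=\vect{x}$ gives $G(f_i)=x_i$, as desired.

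I expect the main obstacle to be the bookkeeping around the two evaluation maps rather than any deep argument: one must be careful that the inner substitution $\vect{x}=\vect{0}$ is applied to coefficients in $A[[\vect{x}]]$ while $\vect{T}$ is left untouched, and only afterwards is $\vect{T}=\vect{x}$ performed, so that the composite is well-defined and continuous. A secondary point worth addressing is the legitimacy of the continuous extension of $\theta_{\vect{f}}$ to the power series ring $A[[\vect{x}]]$, which relies on $\theta_{\vect{f}}$ raising the $(x_1,\ldots,x_n)$-adic order (each $\th{\vect{\mu}}_{\vect{f}}$ interacts well with the filtration once $f_i\equiv 0 \bmod (\vect{x})$), so that the infinite sum defining $G(h)$ converges in $A[[\vect{x}]]$. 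Once these convergence and well-definedness points are settled, the identity $G\circ F=\id$ on generators closes the argument immediately.
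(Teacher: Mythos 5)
Your proposal is correct and follows essentially the same route as the paper: show $G$ is a (continuous) ring homomorphism as a composition of $\theta_{\vect{f}}$ with the two evaluation maps, then verify $G(f_i)=((f_i+T_i)|_{\vect{x}=\vect{0}})|_{\vect{T}=\vect{x}}=x_i$ using $f_i(\vect{0})=0$, and conclude $G\circ F=\id$. The extra remarks on convergence of the continuous extension are a welcome refinement but do not change the argument.
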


\begin{proof}
As the evaluation homomorphisms and $\theta_{\vect{f}}$ are ring homomorphisms, $G$ is a ring homomorphism.
Hence, we only have to show that the formula above gives a formal inverse. By definition,
\[ G(F(x_i))=G(f_i)= \left(\theta_\vect{f}(f_i)|_{\vect{x}=\vect{0}}\right)|_{\vect{T}=\vect{x}}
=  \left((f_i+T_i)|_{\vect{x}=\vect{0}}\right)|_{\vect{T}=\vect{x}} =T_i|_{\vect{T}=\vect{x}}=x_i\]
where we used that $f_i(0,\ldots, 0)=0$. Therefore, $G\circ F=\id_{A[[x_1,\ldots, x_n]]}$, showing the claim.
\end{proof}

\begin{rem} \ 
\begin{enumerate}
\item In \cite{pn-ms:appsr}, Nousiainen and Sweedler have given a formula for the inverse which reads in our notation as
\[ G(h)= \sum_{\vect{\alpha}\in\NN^n} \th{\vect{\alpha}}_{\vect{f}}(h) (\vect{x}-\vect{f})^\vect{\alpha}. \]
The advantage of our formula is that it gives the inverse directly as power series which enables us to state and prove Thm.~\ref{thm:formula-for-polynomial-inverse} below.
\item When $A$ is a $\QQ$-algebra, our formula is exactly the same as the formula given in \cite[Thm.~3.1.1]{avde:pajc}.
However, the invertibility criterion given in  \cite[Prop.~3.1.4(i)]{avde:pajc} does not hold in positive characteristic. E.g.~in positive characteristic $p$, there are elements $h$ for which $\th{p^k}(h)$ might be non-zero, although $\th{n}(h)=0$ for some $n<p^k$. However, the analog of \cite[Prop.~3.1.4(ii)]{avde:pajc} holds which is stated in the next theorem.
\end{enumerate}
\end{rem}

\begin{thm}\label{thm:formula-for-polynomial-inverse}
Let $F:A[x_1,\ldots, x_n]\to A[x_1,\ldots, x_n]$ be as above. Put $d:=\deg(F):=\max\{ \deg(f_i) \mid i=1,\ldots,n \}$ and $N:=d^{n-1}$. If $F$ is an automorphism, then its inverse is given by
\[ G:A[x_1,\ldots, x_n]\to A[x_1,\ldots, x_n], h(x_1,\ldots, x_n) \mapsto \sum_{\substack{\vect{\lambda}\in \NN^n\\ |\vect{\lambda}|\leq N}}
\th{\vect{\lambda}}_{\vect{f}}(h)|_{\vect{x}=\vect{0}} \cdot \vect{x}^\vect{\lambda}. \]
\end{thm}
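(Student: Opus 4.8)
The plan is to start from the formal inverse formula already established in Theorem \ref{thm:formula-for-formal-inverse}, namely $G(h)=\sum_{\vect{\mu}\in\NN^n}\th{\vect{\mu}}_{\vect{f}}(h)|_{\vect{x}=\vect{0}}\,\vect{x}^\vect{\mu}$ for $h\in A[[x_1,\ldots,x_n]]$, and then show that, when $F$ is a genuine polynomial automorphism, this sum truncates: all terms with $|\vect{\mu}|>N=d^{n-1}$ vanish. Since we already know from Theorem \ref{thm:formula-for-formal-inverse} that this $G$ agrees with the inverse of $F$ on $A[[\vect{x}]]$, and since the inverse of a polynomial automorphism maps $A[x_1,\ldots,x_n]$ into itself, it suffices to prove that the coefficients $\th{\vect{\lambda}}_{\vect{f}}(h)|_{\vect{x}=\vect{0}}$ are zero for $|\vect{\lambda}|>N$ whenever $h\in A[x_1,\ldots,x_n]$.

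First I would reduce to the case $h=x_i$. Because $G$ is a ring homomorphism and every $h\in A[x_1,\ldots,x_n]$ is a polynomial in the $x_i$, it is enough to bound the degree of $G(x_i)=g_i$, the components of the inverse. So the core claim is a degree bound on the inverse automorphism: $\deg(g_i)\le N=d^{n-1}$, together with the observation that the coefficient of $\vect{x}^\vect{\lambda}$ in $g_i$ is exactly $\th{\vect{\lambda}}_{\vect{f}}(x_i)|_{\vect{x}=\vect{0}}$ (read off from the Theorem \ref{thm:formula-for-formal-inverse} formula applied to $h=x_i$). Thus the statement becomes the classical degree bound $\deg(F^{-1})\le(\deg F)^{n-1}$ for polynomial automorphisms.

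The main obstacle, and the technical heart of the argument, is establishing this degree bound $\deg(g_i)\le d^{n-1}$. This is a known result (the bound $\deg(F^{-1})\le(\deg F)^{n-1}$ for automorphisms, sometimes attributed to Gabber), and I would invoke it rather than reprove it, citing the literature on polynomial automorphisms such as \cite{avde:pajc}. The subtlety is that this bound holds for automorphisms but not for arbitrary étale endomorphisms, so the hypothesis that $F$ is an automorphism (not merely that $\det(\Jac{f}{x})\in A^\times$) is used precisely here; this is also the reason the remark after Theorem \ref{thm:formula-for-formal-inverse} points out that the invertibility criterion of \cite[Prop.~3.1.4(i)]{avde:pajc} fails in positive characteristic.

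Once the degree bound is in hand, the conclusion is immediate: for every generator $x_i$ we have $\th{\vect{\lambda}}_{\vect{f}}(x_i)|_{\vect{x}=\vect{0}}=0$ for $|\vect{\lambda}|>N$, so the formal series $G(x_i)$ is in fact the polynomial $\sum_{|\vect{\lambda}|\le N}\th{\vect{\lambda}}_{\vect{f}}(x_i)|_{\vect{x}=\vect{0}}\,\vect{x}^\vect{\lambda}$. Since $G$ is a ring homomorphism and the polynomial map displayed in the statement agrees with $G$ on the generators $x_i$, the two maps coincide on all of $A[x_1,\ldots,x_n]$, and this restricted $G$ is the inverse of $F$ as a polynomial automorphism. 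I would close by noting that the truncated sum genuinely lands in $A[x_1,\ldots,x_n]$, so the formula defines the inverse as a polynomial map, which is the desired statement.
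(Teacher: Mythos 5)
Your proof is correct and follows essentially the same route as the paper: restrict the formal inverse of Theorem \ref{thm:formula-for-formal-inverse} to the polynomial ring and truncate using the classical degree bound $\deg(F^{-1})\le(\deg F)^{n-1}$, which the paper cites from \cite[Cor.~1.4]{hb-ec-dw:jcrdfei}. Your added observation that the identity should really be read on the generators $x_i$ (since for $\deg h\ge 2$ the series for $G(h)$ need not truncate at degree $N$) is a worthwhile clarification of how the statement is to be understood.
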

 
\begin{proof}
If $F$ is an automorphism, its inverse has to be the restriction of the formal inverse $G$ of Theorem \ref{thm:formula-for-formal-inverse}. Furthermore, by \cite[Cor.~1.4]{hb-ec-dw:jcrdfei}, the degree of the inverse is at most $N$. 
\end{proof} 

Using Thm.~\ref{thm:inversion-formula-higher-derivatives} and the previous formula for the inverse of a polynomial automorphism, one obtains an algorithm for computing the inverse.\footnote{Although in Thm.~\ref{thm:inversion-formula-higher-derivatives} we have a base field, the formulas given there hold over any ring $A$.}
Any automorphism  $F:A[x_1,\ldots, x_n]\to A[x_1,\ldots, x_n]$ can easily be decomposed  into an affine transformation and a polynomial automorphism sending $(0,\ldots,0)$ to $(0,\ldots,0)$, and whose Jacobian matrix is congruent to the identity matrix modulo the ideal $(x_1,\ldots, x_n)$. We therefore use as input of the algorithm the transformed automorphism.

%
%
%
\bigskip

\textbf{Algorithm:}\\
INPUT: An automorphism $F:A[x_1,\ldots, x_n]\to A[x_1,\ldots, x_n], x_i\mapsto f_i$ such that $f_i(0,\ldots, 0)=0$ for all $i$ and that $\Jac{f}{x}|_{\vect{x}=\vect{0}}=\uob_n$.\\
OUTPUT: A tuple $g_1,\ldots, g_n$ inducing an automorphism $G:A[x_1,\ldots, x_n]\to A[x_1,\ldots, x_n], x_i\mapsto g_i$ which is the inverse of $F$.

Algorithm:
\renewcommand\theenumi{\arabic{enumi}.}
\renewcommand\labelenumi{\theenumi}
\begin{enumerate}
\item Set $d:=\deg(F)$ and $N:=d^{n-1}$.
\item Set $\alpha_{i,\vect{e_j}}:=\partdef{1, & i=j\\ 0, & i\ne j}$ for $i,j=1,\ldots, n$ and $\vect{e_j}$ being the $j$-th standard vector.
\item for $l:=2$ to $N$ do
\begin{enumerate}
\item[ ] Compute for all $\vect{\lambda}\in \NN^n$ with $|\vect{\lambda}|=l$ and $i=1,\ldots, n$:
\[ \alpha_{i,\vect{\lambda}}:=-\sum_{0<|\vect{\nu}|<l} \alpha_{i,\vect{\nu}} \cdot \left( \text{coeff. of }\vect{x}^\vect{\lambda}\text{ in } \vect{f}^\vect{\nu} \right). \]
\end{enumerate}
\item Define for $i=1,\ldots, n$:
\[ g_i:= \sum_{\substack{\vect{\lambda}\in \NN^n\\ 0<|\vect{\lambda}|\leq N}}
\alpha_{i,\vect{\lambda}} \cdot \vect{x}^\vect{\lambda}. \]
\end{enumerate}
\renewcommand\theenumi{(\roman{enumi})}
\renewcommand\labelenumi{\theenumi}

\begin{proof}
We prove that the algorithm really computes the inverse given in Theorem \ref{thm:formula-for-polynomial-inverse},  hence that the $\alpha_{i,\vect{\lambda}}$ are equal to $\th{\vect{\lambda}}_{\vect{f}}(x_i)|_{\vect{x}=\vect{0}}$. This is done by induction on $m=|\vect{\lambda}|$. Also be aware that $\th{\vect{0}}_{\vect{f}}(x_i)|_{\vect{x}=\vect{0}}=x_i|_{\vect{x}=\vect{0}}=0$, so that indeed  the sum for $G(x_i)$ in Thm.~\ref{thm:formula-for-polynomial-inverse} starts at $|\vect{\lambda}|=1$.

For $m=1$, we have $\vect{\lambda}=\vect{e_j}$ for some $j\in \{1,\ldots, n\}$ and
\[ \th{\vect{e_j}}_{\vect{f}}(x_i)|_{\vect{x}=\vect{0}}=\frac{\partial x_i}{\partial f_j}|_{\vect{x}=\vect{0}}=\left. \partdef{1, & i=j\\ 0, & i\ne j} \right\}=
\alpha_{i,\vect{e_j}}, \]
since by assumption $\Jac{x}{f}|_{\vect{x}=\vect{0}}=\left(\Jac{f}{x}|_{\vect{x}=\vect{0}}\right)^{-1}=\uob_n$.

Now let $m>1$. By Equation~\eqref{eq:recursion-formula} we have
\[ 
\left(   \theta_{\vect{f}}^{(\vect{\mu})}(x_i) \right)_{\vect{\mu}\in I_m} = - \left( \left( c_{\vect{\lambda},\vect{\mu}}\right)_{\vect{\lambda},\vect{\mu}\in I_m}\right)^{-1} \cdot  \left(  \sum_{\substack{\vect{\nu}\in \NN^n\\ 0<|\vect{\nu}|<m}} \theta_{\vect{f}}^{(\vect{\nu})}(x_i)\cdot
Q_i(\vect{\nu},\vect{\lambda}) \right)_{\vect{\lambda}\in I_m}, \]
where 
$I_m=\{ \vect{\mu'}\in \NN^n \mid  |\vect{\mu'}|=m \}$, $Q_i(\vect{\nu},\vect{\lambda})$ is the coefficient of $\vect{T}^\vect{\lambda}$ in $\prod_{j=1}^n \left( \theta_{\vect{x}}(f_j)-f_j\right)^{\nu_j}$, and $\left( c_{\vect{\lambda},\vect{\mu}}\right)_{\vect{\lambda},\vect{\mu}\in I_m}$ is the $l$-th symmetric power of the transpose of $\Jac{f}{x}$. By reducing modulo $(x_1,\ldots,x_n)$ and recognizing that modulo $(x_1,\ldots,x_n)$ the matrix $\left( c_{\vect{\lambda},\vect{\mu}}\right)_{\vect{\lambda},\vect{\mu}\in I_m}$  is the identity matrix,
 we get that
\[  \theta_{\vect{f}}^{(\vect{\lambda})}(x_i)|_{\vect{x}=\vect{0}} = - \sum_{\substack{\vect{\nu}\in \NN^n\\ 0<|\vect{\nu}|<m}} \alpha_{i,\vect{\nu}}\cdot Q_i(\vect{\nu},\vect{\lambda})|_{\vect{x}=\vect{0}}. \]
As we have $\left( \theta_{\vect{x}}(f_j)-f_j\right)|_{\vect{x}=\vect{0}}
=\left( f_j(\vect{x}+\vect{T})-f_j(\vect{x})\right)|_{\vect{x}=\vect{0}}=f_j(\vect{T})$, the term
$Q_i(\vect{\nu},\vect{\lambda})|_{\vect{x}=\vect{0}}$ equals the coefficient of $\vect{T}^\vect{\lambda}$ in
$\prod_{j=1}^n f_j(\vect{T})^{\nu_j}$, hence the coefficient of $\vect{x}^\vect{\lambda}$ in $\vect{f}^\vect{\nu}$.

Therefore, $\alpha_{i,\vect{\lambda}}=\th{\vect{\lambda}}_{\vect{f}}(x_i)|_{\vect{x}=\vect{0}}$ for $|\vect{\lambda}|=m$. 
\end{proof}

\begin{rem}
If one naively computes the inverse $G$ by the Ansatz \[ g_i:= \sum_{\substack{\vect{\lambda}\in \NN^n\\ 0<|\vect{\lambda}|\leq N}}
\alpha_{i,\vect{\lambda}} \cdot \vect{x}^\vect{\lambda}\]
and elaborating the conditions on the $\alpha_{i,\vect{\lambda}}$ using
$g_i(f_1,\ldots, f_n)=x_i$, one will get exactly the equations in the algorithm:
\[ \alpha_{i,\vect{\lambda}}=-\sum_{0<|\vect{\nu}|<l} \alpha_{i,\vect{\nu}} \cdot \left( \text{coeff. of }\vect{x}^\vect{\lambda}\text{ in } \vect{f}^\vect{\nu} \right). \]

Hence, the formula for the inverse in Theorem \ref{thm:formula-for-polynomial-inverse} is not good for algorithmic purposes, but only for theoretical purposes. In practice, the computation via Gr\"obner bases given in \cite[Thm.~3.2.1]{avde:pajc} is much faster.
\end{rem}

\bibliographystyle{plain}
\def\cprime{$'$}

\vspace*{.5cm}

\end{document}